\documentclass[11pt]{amsart}
\usepackage{mathrsfs}
\usepackage{times}
\usepackage[T1]{fontenc}
\usepackage{mathrsfs}
\usepackage{latexsym}
\usepackage[dvips]{graphics}
\usepackage{epsfig}
\usepackage{amsmath,amsfonts,amsthm,amssymb,amscd}
\input amssym.def
\input amssym.tex
\usepackage{color}
\usepackage[all]{xypic}

\newtheorem{theorem}{Theorem}[section]
\newtheorem{lemma}[theorem]{Lemma}

\newtheorem{figuretext}{Figure}
\theoremstyle{remark}
\newtheorem{remark}[theorem]{Remark}
\newtheorem*{remark*}{Remark}
\theoremstyle{remark}
\pagestyle{plain}

\addtolength{\topmargin}{-10ex}
\addtolength{\oddsidemargin}{-3em}
\addtolength{\evensidemargin}{-3em}
\addtolength{\textheight}{15ex}
\addtolength{\textwidth}{4em}

\newcommand{\R}{{\Bbb R}}

\newcommand{\Z}{{\Bbb Z}}

%%%%% DOCUMENT SPECIFIC DEFINITIONS

%  Uncomment to allow pagebreaks within groups of equations:
%\allowdisplaybreaks

%  Theorems, Lemmas and the like, should be typeset in italic
%\newtheorem{theorem}{Theorem}
%\newtheorem{lemma}{Lemma}

%%%%% END DOCUMENT SPECIFIC DEFINITIONS
\begin{document}

\title{Stability of the $\mu$-Camassa-Holm Peakons}
\author{Robin Ming Chen}
\address{Robin Ming Chen\newline
School of Mathematics\\
University of Minnesota\\
Minneapolis, MN 55455} \email{chenm@math.umn.edu}
\author{Jonatan Lenells}
\address{Jonatan Lenells\newline
Department of Mathematics\\
Baylor University\\
Waco, TX 76798-7328} \email{Jonatan\_Lenells@baylor.edu}
\author{Yue Liu}
\address{Yue Liu\newline
Department of Mathematics, University of Texas at Arlington, Arlington, TX 76019-0408}
\email{yliu@uta.edu}

\thanks{The work of R.M. Chen was partially supported by the NSF grant DMS-0908663. The work of Y. Liu was partially supported by the NSF grant DMS-0906099 and the NHARP grant 003599-0001-2009.}

\maketitle \numberwithin{equation}{section}
%\maketitle

\begin{abstract}
The $\mu$-Camassa-Holm ($\mu$CH) equation is a nonlinear integrable partial differential equation closely related to the Camassa-Holm equation. We prove that the periodic peaked traveling wave solutions (peakons) of the $\mu$CH equation are orbitally stable.
\end{abstract}

\noindent
{\small{\sc AMS Subject Classification (2000)}: 35Q35, 37K45.}

\noindent
{\small{\sc Keywords}: Water waves, Camassa-Holm equation, Peakons, Stability.}

%\tableofcontents

\section{Introduction}
The nonlinear partial differential equation
\begin{equation}\label{muCH}
  \mu(u_t)-u_{xxt}=-2\mu(u)u_x+2u_xu_{xx}+uu_{xxx}, \qquad t > 0,  \quad x \in S^1 = \R/\Z, \\
\end{equation}
where $u(x,t)$ is a real-valued spatially periodic function and $\mu(u)=\int_{S^1}u(x,t)dx$ denotes its mean, was recently introduced in \cite{KLM} as an integrable equation arising in the study of the diffeomorphism group of the circle. It describes the propagation of self-interacting, weakly nonlinear orientation waves in a massive nematic liquid crystal under the influence of an external magnetic field.
The closest relatives of (\ref{muCH}) are the Camassa-Holm  equation \cite{C-H, F-F}
\begin{equation}\label{CH}
u_t-u_{txx}+3uu_x=2u_xu_{xx}+uu_{xxx},
\end{equation}
and the Hunter-Saxton \cite{H-S} equation
 \begin{equation}\label{HS}
-u_{txx}=2u_xu_{xx}+uu_{xxx}.
\end{equation}
%If $\mu(u)=0$, which implies that $\mu(u_t) = 0$, equation (\ref{muCH}) reduces to the Hunter-Saxton equation.
In fact, each of the equations (\ref{muCH})-(\ref{HS}) can be written in the form
\begin{equation}\label{CHmform}
m_t + u m_x + 2 u_x m = 0, \qquad m = Au,
\end{equation}
where the operator $A$ is given by $A= \mu - \partial_x^2$ in the case of (\ref{muCH}), $A = 1 - \partial_x^2$ in the case of (\ref{CH}), and $A = - \partial_x^2$ in the case of (\ref{HS}).
Following \cite{lmt}, we will refer to equation (\ref{muCH}) as the $\mu$-Camassa-Holm ($\mu$CH) equation.

Equations (\ref{muCH})-(\ref{HS}) share many remarkable properties: (a) They are all completely integrable systems with a corresponding Lax pair formulation, a bi-Hamiltonian structure, and an infinite sequence of conservation laws, see \cite{C-H, C-M, H-Z, KLM}. (b) They all arise geometrically as equations for geodesic flow in the context of the diffeomorphism group of the circle $\text{Diff}(S^1)$ endowed with a right-invariant metric \cite{KLM, K-M, kou, mis2, Shk98}. (c) They are all models for wave breaking (each equation admits initially smooth solutions which break in finite time in such a way that the wave remains bounded while its slope becomes unbounded) cf. \cite{C-H, con1, con4, C-M, H-S, KLM, mis1}.

A particularly interesting feature of the Camassa-Holm equation is that it admits peaked soliton solutions \cite{C-H}. These solutions (called peakons) are traveling waves with a peak at their crest and they occur both in the periodic and in the non-periodic setting. It was noted in \cite{lmt} that the $\mu$CH equation also admits peakons: For any $c \in \R$, the peaked traveling-wave $u(x, t) = c\varphi(x - ct)$, where (see figure \ref{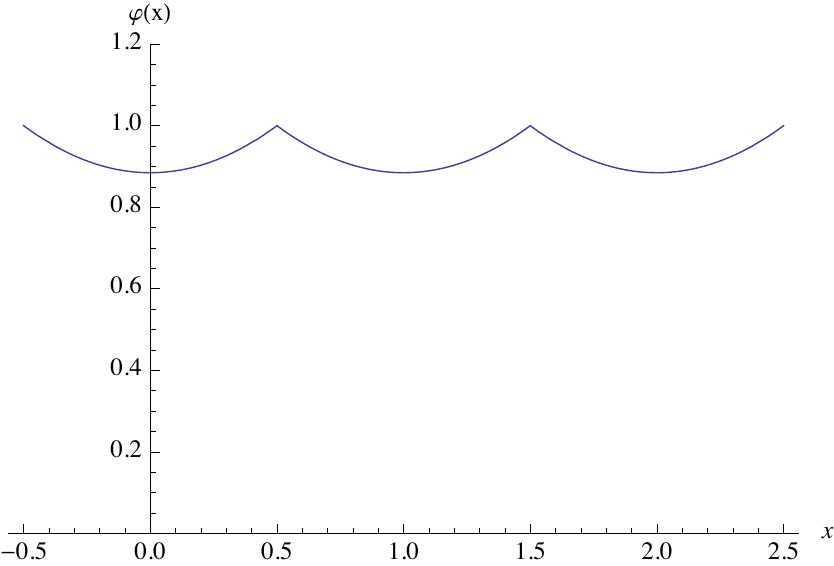})
\begin{equation}\label{varphipeakon}
  \varphi(x) = \frac{1}{26}(12 x^2 + 23) \quad \text{for} \quad x \in [-1/2, 1/2]
\end{equation}
and $\varphi$ is extended periodically to the real line, is a solution of (\ref{muCH}).
Note that the height of the peakon $c\varphi(x-ct)$ is proportional to its speed.

If waves such as the peakons are to be observable in nature, they need to be stable under small perturbations. The stability of the peakons is therefore of great interest. Since a small change in the height of a peakon yields another
one traveling at a different speed, the correct notion of stability here is that of {\it orbital stability}:
a periodic wave with an initial profile close to a peakon remains close to some translate of it for all later times. That is, the shape of the wave remains approximately the same for all times.

The Camassa-Holm peakons are orbitally stable in the non-periodic setting \cite{C-S} as well as in the periodic case \cite{le1}.
In this paper, we show that the periodic $\mu$CH peakons given by (\ref{varphipeakon}) are also orbitally stable:

\begin{theorem}\label{thm_stabintro}
The periodic peakons of equation (\ref{muCH}) are orbitally stable in $ H^1({S^1}). $
\end{theorem}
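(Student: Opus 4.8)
\medskip

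\noindent\emph{Sketch of a proof.}
The plan is to adapt the conservation-law/variational method of Constantin and Strauss \cite{C-S} (line case) and of Lenells \cite{le1} (periodic case) to the $\mu$CH functionals. Besides the mean $\mu(u)=\int_{S^1}u\,dx$, the $\mu$CH flow conserves
\[
  E(u)=\mu(u)^2+\int_{S^1}u_x^2\,dx
  \qquad\text{and}\qquad
  F(u)=\int_{S^1}\Bigl(\mu(u)u^2+\tfrac12\,u\,u_x^2\Bigr)\,dx ,
\]
where in fact $\mu(u)$ and $\int_{S^1}u_x^2\,dx$ are \emph{individually} conserved and $F$ is a cubic conservation law (the precise normalization of $F$ is immaterial below; see \cite{KLM,lmt}). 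On $S^1=\R/\Z$ the Poincar\'e inequality $\|v-\mu(v)\|_{L^2}\le(2\pi)^{-1}\|v_x\|_{L^2}$ shows that $E(v)$ is equivalent to $\|v\|_{H^1}^2$. Using local well-posedness of $\mu$CH in $H^s$, $s>3/2$ \cite{KLM}, a standard approximation-and-continuation argument reduces Theorem~\ref{thm_stabintro} to the a priori bound: if $u$ is a solution with $\|u(\cdot,0)-\varphi\|_{H^1}<\delta$, then $\inf_{\xi\in S^1}\|u(\cdot,t)-\varphi(\cdot-\xi)\|_{H^1}\le C\sqrt{\delta}$ for all $t$ in the maximal existence interval.

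The heart of the matter is to control the maximal height $M(t):=\max_{x\in S^1}u(x,t)$ (attained, since $H^1(S^1)\hookrightarrow C(S^1)$). I would establish a sharp functional inequality
\[
  F(v)\ \le\ \Phi\bigl(M(v),\,m(v),\,E(v)\bigr)\qquad\text{for all } v\in H^1(S^1),
\]
with an explicit polynomial $\Phi$, $m(v):=\min v$, and with \emph{equality precisely when $v$ is a translate of $c\varphi$}. To obtain it, pick a maximum point $\xi$ and a minimum point $\eta$ of $v$, split $S^1$ into the two arcs they determine, and on each arc combine the pointwise inequalities $\int (v\mp v_x)^2\,dx\ge 0$ with the telescoping identity $\int_a^b v\,v_x\,dx=\tfrac12\bigl(v(b)^2-v(a)^2\bigr)$, the bound $\bigl|\int v\,v_x^2\,dx\bigr|\le M(v)\int v_x^2\,dx$, and $\int v_x^2\,dx=E(v)-\mu(v)^2$. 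Comparison with the explicit peakon data --- $\mu(\varphi)=\tfrac{12}{13}$, $E(\varphi)=\tfrac{12}{13}$, $\max\varphi=\varphi(\pm\tfrac12)=1$, $\min\varphi=\varphi(0)=\tfrac{23}{26}$, and the corresponding value of $F(\varphi)$ --- identifies the equality case. Evaluating this inequality along the solution and using that $E$, $F$, $\mu$ are conserved and lie within $O(\delta)$ of their peakon values yields a polynomial inequality in $M(t)$ whose feasible set contracts to $\{M=1\}$ as $\delta\to0$; hence $M(t)=1+O(\sqrt\delta)$ (and $m(t)=\tfrac{23}{26}+O(\sqrt\delta)$) uniformly in $t$.

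Finally, pass from height control to $H^1$ control. The bilinear form of $E$ is $\langle v,w\rangle_E=\int_{S^1}v\,(\mu(w)-w_{xx})\,dx=\mu(v)\mu(w)+\int_{S^1}v_xw_x\,dx$, and the $\mu$CH peakon momentum $\mu(\varphi)-\varphi_{xx}$ has vanishing absolutely continuous part on $(-\tfrac12,\tfrac12)$ and equals the Dirac mass $\tfrac{12}{13}\delta$ at the peak (the jump of $\varphi'$ there is $-\tfrac{12}{13}$). Hence, choosing $\xi(t)$ so that the peak of $\varphi(\cdot-\xi(t))$ sits at a maximum point of $u(\cdot,t)$,
\[
  \bigl\|u(\cdot,t)-\varphi(\cdot-\xi(t))\bigr\|_E^2
  = E\bigl(u(\cdot,0)\bigr)-\tfrac{24}{13}\,M(t)+E(\varphi)
  = \tfrac{12}{13}-\tfrac{24}{13}+\tfrac{12}{13}+O(\sqrt\delta)=O(\sqrt\delta) ,
\]
using conservation of $E$ and the previous step; the equivalence of $\|\cdot\|_E$ with $\|\cdot\|_{H^1}$ concludes the proof.

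I expect the main obstacle to be the sharp functional inequality of the second paragraph. In the periodic geometry the $L^2$-``mass'' of $v$ may concentrate near both a maximum and a minimum, so the single split used on the line is insufficient; one must carefully account for the two arcs and for the cubic term $\int v\,v_x^2$. The subtle point is not merely to bound these quantities but to produce an inequality that is \emph{saturated exactly along the peakon orbit}, which requires pinning down the precise algebraic relations among $E(\varphi)$, $F(\varphi)$, $\max\varphi$, $\min\varphi$, $\mu(\varphi)$ and checking that no other profile achieves equality.
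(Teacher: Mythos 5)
Your outline follows essentially the same route as the paper: the conserved quantities $H_0=\mu(u)$, $E=2H_1$, $F=H_2$; an arc-splitting inequality obtained from $\int u\,g^2\ge 0$ with $g=u_x\mp(\text{peakon profile term})$ that is saturated exactly on the peakon orbit; height control for $M_{u(t)}$, $m_{u(t)}$; and finally the identity $\langle u,\varphi(\cdot-\xi)\rangle_\mu=\tfrac{12}{13}\,u(\xi+1/2)$ (the Dirac-mass structure of $\mu(\varphi)-\varphi_{xx}$) to convert height control into $H^1$ control. That last computation of yours is precisely the paper's Lemma \ref{lm_H1est}, and your equality-case discussion matches Lemma \ref{lm_maxf}.

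There is, however, one concrete gap, and it is exactly the point the paper singles out as the new difficulty relative to the CH case. Your central claim is an inequality $F(v)\le\Phi\bigl(M(v),m(v),E(v)\bigr)$ with $\Phi$ depending only on conserved quantities and the extrema. The arc-splitting computation you describe does not produce such a $\Phi$: the term $\tfrac{144}{169}\int \tfrac{13}{6}\,v\,(v-m)\,dx$ arising from squaring $g$ against the weight $v$ leaves behind $\int_{S^1}v^2\,dx$, which is \emph{not} conserved by the $\mu$CH flow and is not a function of $M$, $m$, $E$, $\mu(v)$. (Compare the paper's identity (\ref{ugsquare}) and Lemma \ref{lm_LyapunovF}, where $F_u$ explicitly contains $\bigl(H_0[u]-\tfrac{12}{13}\bigr)\int_{S^1}u^2\,dx$.) The rescue is that this term appears with the coefficient $\mu(u)-\mu(\varphi)=O(\delta)$, while $\int u^2\,dx$ is bounded uniformly in time via $\int u^2\le\|u\|_{H^1}^2\le 6H_1[u]$, so the offending term is uniformly small even though it is time-dependent; your sketch needs this extra step. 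A second, smaller omission: the superlevel set $\{F\ge 0\}$ need not globally ``contract to $\{M=1\}$''; one only controls it \emph{near} $(M_\varphi,m_\varphi)$ using the negative-definite Hessian of $F_\varphi$ there, and one must invoke continuity of $t\mapsto(M_{u(t)},m_{u(t)})$ (the paper's Lemma \ref{lm_contMm}) to rule out the trajectory jumping to another component of the feasible set.
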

An outline of the proof of thereom \ref{thm_stabintro} is given in section \ref{outlinesec}, while a detailed proof is presented in section \ref{proofsec}.
We conclude the paper with section \ref{commentssec} where we discuss some results on the existence of solutions to (\ref{muCH}).

\begin{figure}
   \begin{center}
    \includegraphics{peakon.pdf} \\
    \begin{figuretext}\label{peakon.pdf}
      The periodic peakon $\varphi(x)$ of the $\mu$CH equation.
     \end{figuretext}
   \end{center}
\end{figure}

\section{Outline of Proof}\label{outlinesec}
There are two standard methods for studying stability of a solution of a dispersive wave equation. The first method consists of linearizing the equation around the solution. In many cases, nonlinear stability is governed by the linearized equation. However, for the $\mu$CH and CH equations, the nonlinearity plays the dominant role rather than being a higher-order perturbation of the linear terms. Thus, it is not clear how to prove nonlinear stability of the peakons using the linearized problem. Moreover, the peakons $c\varphi(x-ct)$ are continuous but not differentiable, which makes it hard to analyze the spectrum of the operator linearized around $c\varphi$.

The second method is variational in nature. In this approach, the solution is realized as an energy minimizer under appropriate constraints. Stability follows if the uniqueness of the minimizer can be established (otherwise one only obtains the stability of the set of minima). A proof of the stability of the Camassa-Holm peakons using the variational approach is given in \cite{C-L1} for the case on the line and in \cite{le2} for the periodic case.

In this paper, we prove stability of the peakon (\ref{varphipeakon}) using a method that is different from both of the above methods. Taking $c = 1$ for simplicity, our approach can be described as follows.
%We will construct a Lyapunov functional using the conservation law $H_1$ given below in \eqref{conservationlaws} of the equation. By expanding $H_1$ around the peakon $c\varphi$, the error term is in the form of difference of the maxima of $c\varphi$ and the perturbed solution $u$. This difference is estimated by two integral relations involving the maximum of $u$.
To each function $w:S^1 \to \R$, we associate a function $F_w(M,m)$ of two real variables $(M, m)$ in such a way that the correspondence $w \mapsto F_w$ has the following properties:

\begin{itemize}
\item If $u(x,t)$ is a solution of (\ref{muCH}) with maximal existence time $T>0$, then
\begin{equation}\label{Fgeqzero}
F_{u(t)}(M_{u(t)},m_{u(t)})\geq 0, \qquad t \in [0,T),
\end{equation}
where $M_{u(t)}= \max_{x \in S^1} \{u(x,t)\}$ and $m_{u(t)}= \min_{x \in {S^1}}\{u(x,t)\}$ denote the maximum and minimum of $u$ at the time $t$, respectively.
\item For the peakon, we have $F_\varphi \equiv F_{\varphi(\cdot)} = F_{\varphi(\cdot - t)}$ and $F_\varphi(M,m) \leq 0$ for all $(M,m)$ with equality if and only if $(M,m) = (M_\varphi, m_\varphi)$, see figure \ref{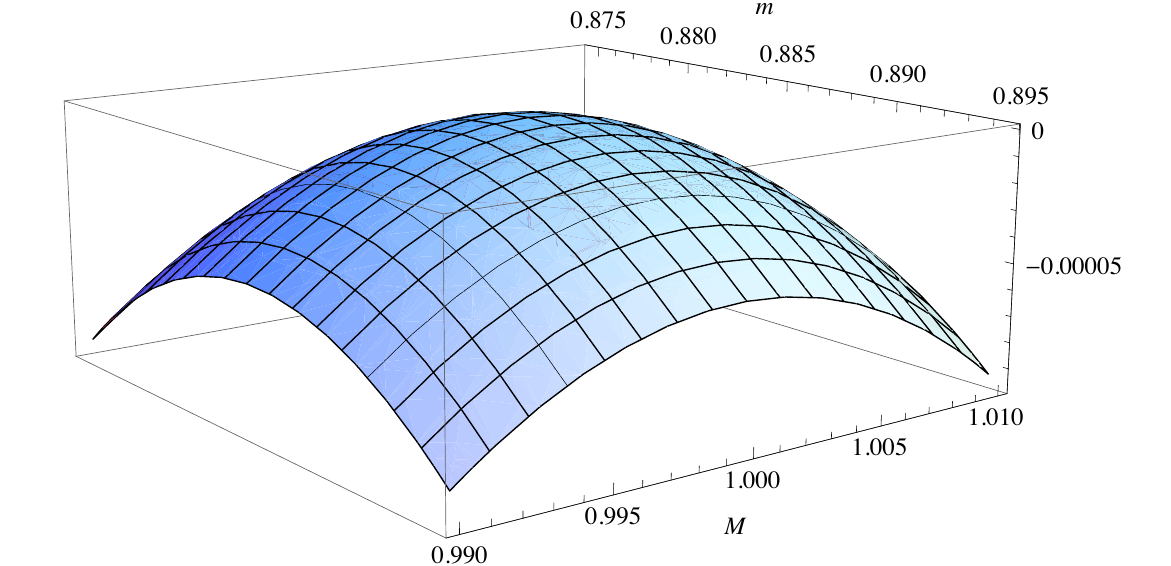}.
\item If $w:S^1 \to \R$ is such that $H_i[w]$ is close to $H_i[\varphi]$, $i = 0,1,2$, where $H_0, H_1, H_2$ are the conservation laws of (\ref{muCH}) given by
\begin{align}  \label{conservationlaws}
H_0[u] = \int u dx, \quad
H_1[u] = \frac{1}{2}\int mu dx, \quad
H_2[u] = \int \left(\mu(u)u^2 + \frac{1}{2}uu_x^2 \right)dx,
\end{align}
then the function $F_{w}$ is a small perturbation of $F_{\varphi}$.
\end{itemize}
Using the correspondence $w \mapsto F_w$, stability of the peakon is proved as follows.
If $u$ is a solution starting close to the peakon $\varphi$, the conserved quantities $H_i[u]$ are close to
$H_i[\varphi]$, $i=0,1,2$, and hence $F_{u(t)}$ is a small perturbation of $F_{\varphi}$ for any $t \in [0, T)$. This implies that the set where $F_{u(t)} \geq 0$ is contained in a small neighborhood of
$(M_\varphi, m_\varphi)$ for any $t \in [0, T)$. We conclude from (\ref{Fgeqzero}) that
$(M_{u(t)},m_{u(t)})$ stays close to
$(M_\varphi,m_\varphi)$ for all times. The proof is completed by noting
that if the maximum of $u$ stays close to the maximum of the peakon,
then the shape of the whole wave remains close to that of the peakon.

Our proof is inspired by \cite{le1} where the stability of the periodic peakons of the Camassa-Holm equation
is proved.\footnote{The proof in \cite{le1} is in turn inspired by the proof of stability of the Camassa-Holm peakons on the line presented in \cite{C-S}.}
 The approach here is similar, but there are differences. The main difference is that in \cite{le1} the function $F_u$ associated with a solution $u(x,t)$ could be chosen to be independent of time, whereas here the function $F_{u(t)}$ depends on time. Indeed, our definition of the function $F_{u(t)}(M,m)$ involves the $L^2$-norm $\|u(t)\|_{L^2({S^1})}$, which is not conserved in time. However, since this norm is controlled by the conservation law $H_1$, we can ensure that it remains bounded for all times. This turns out to be enough to ascertain that the function $F_{u(t)}$, despite its time-dependence, remains close to $F_\varphi$ for all $t \in [0, T)$.

\section{Proof of Stability}\label{proofsec}
We will identify $S^1$ with the interval $[0,1)$ and view
functions on $S^1$ as
periodic functions on the real line of period one. For an integer $n \geq 1$,
we let $H^n(S^1)$ denote the Sobolev space of all square integrable functions
$f \in L^2(S^1)$ with distributional derivatives $\partial_x^i f \in L^2(S^1)$ for
$i=1,\dots,n$. The norm on $H^n(S^1)$ is given by
$$\| f \|_{H^n(S^1)}^2 = \sum_{i=0}^n \int_{S^1} (\partial_x^i f)^2(x) dx.$$
Equation (\ref{muCH}) can be recast in conservation form as
\begin{equation} \label{weakmuCH}
u_t + uu_x
+
A^{-1}\partial_x \Big( 2\mu(u)u +\frac{1}{2}u_x^2 \Big)
= 0,
\end{equation}
where $A= \mu - \partial_x^2$ is an isomorphism between $H^s(S^1)$ and $H^{s-2}(S^1)$ cf. \cite{KLM}.
By a {\it weak solution} $u$ of (\ref{muCH}) on $[0,T)$ with $T>0$, we mean a function $u \in
C([0,T); H^1(S^1))$ such that (\ref{weakmuCH}) holds in distributional sense and
the functionals $H_i[u]$, $i=0,1,2$, defined in (\ref{conservationlaws}) are
independent of $t \in[0,T)$. The peakons defined in (\ref{varphipeakon}) are weak solutions in this sense \cite{lmt}.
Our aim is to prove the following precise reformulation of the theorem
stated in the introduction.

\begin{theorem} \label{thm_stabprecise}
For every $\epsilon >0$ there is a $\delta >0$ such that if
    $u \in C([0,T); H^1({S^1}))$ is a weak solution of (\ref{muCH}) with
    $$ \|u(\cdot, 0) - c\varphi \|_{H^1({S^1})} < \delta$$
  then
    $$ \|u(\cdot, t) - c\varphi (\cdot - \xi (t) + 1/2) \|_{H^1({S^1})} < \epsilon \quad
       \textrm{for} \quad t \in [0,T),$$
  where $\xi(t) \in \R$ is any point where the function $u(\cdot, t)$
  attains its maximum.
\end{theorem}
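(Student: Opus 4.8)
The plan is to carry out the programme of Section~\ref{outlinesec}: make the auxiliary function $F_w(M,m)$ explicit, prove the three bullet properties, and then run a perturbation argument. First, some reductions. The case $c=0$ is immediate, and since $u(x,t)\mapsto c\,u(x,ct)$ sends weak solutions to weak solutions and the peakon to the peakon, multiplies $\|u(\cdot,0)-c\varphi\|_{H^1}$ by $|c|$, and scales $H_0,H_1,H_2$ homogeneously, it suffices to treat $c=1$. From (\ref{conservationlaws}) one has $H_0[u]=\mu(u)$ and $H_1[u]=\tfrac12(\mu(u)^2+\|u_x\|_{L^2}^2)$, so if $\|u(\cdot,0)-\varphi\|_{H^1}$ is small then $\mu(u)$ and $\|u_x(t)\|_{L^2}^2$ are, for every $t\in[0,T)$, close to $\mu(\varphi)=12/13$ and $\|\varphi_x\|_{L^2}^2=12/169$, and by the Poincar\'e inequality on $S^1$, $\|u(t)\|_{L^2}^2\le\mu(u)^2+\tfrac1{4\pi^2}\|u_x(t)\|_{L^2}^2$ is uniformly bounded. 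I would also record $M_\varphi=\varphi(1/2)=1$, $m_\varphi=\varphi(0)=23/26$, the values of $H_1[\varphi],H_2[\varphi],\|\varphi\|_{L^2}^2$, and the distributional identity $A\varphi=\mu(\varphi)-\varphi_{xx}=\tfrac{12}{13}\,\delta_{1/2}$ on $S^1$ (i.e. $\varphi$ is a multiple of the Green's function of $A$), the periodic analogue of the line identity $A\varphi=\delta$.

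\emph{Construction of $F_w$ and the basic inequality.} Given $w\in H^1(S^1)$ with maximum $M$ attained at $\xi$ and minimum $m$ at $\eta$, the circle is the union of the two arcs joining $\xi$ and $\eta$. On these arcs I would integrate a non-negative quantity of the form $g^2\cdot(\text{weight})$, with $g$ a suitable first-order combination of $w$ and $w_x$ adapted to $A=\mu-\partial_x^2$ and the weight built from the non-negative functions $M-w$ and $w-m$, in the spirit of the key pointwise estimate of \cite{C-S} and the periodic argument of \cite{le1}; using $w_x(\xi)=w_x(\eta)=0$ to evaluate boundary terms converts the resulting identity into an inequality $H_2[w]\ge Q_w(M,m)$, where $Q_w$ is an explicit polynomial in $(M,m)$ whose coefficients involve only $\mu(w)$, $\|w_x\|_{L^2}^2$ and $\|w\|_{L^2}^2$. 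Setting $F_w(M,m):=H_2[w]-Q_w(M,m)$ then gives (\ref{Fgeqzero}) for every weak solution. A direct computation with the explicit quadratic $\varphi$ should yield $F_\varphi(M,m)\le0$, with equality precisely at $(M,m)=(M_\varphi,m_\varphi)$ and a non-degenerate maximum there --- the two-variable counterpart of the factorization $-\tfrac23(M-1)^2(M+2)$ in the line case --- which I would verify by factoring $-F_\varphi$ and checking its Hessian at $(M_\varphi,m_\varphi)$.

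\emph{Perturbation and conclusion.} Since $F_w$ depends on $w$ only through $H_0[w],H_1[w],H_2[w]$ and $\|w\|_{L^2}^2$ --- the first three conserved and close to the peakon values, the last uniformly bounded through $H_1$ --- the function $F_{u(t)}$ is, uniformly in $t\in[0,T)$, a small perturbation of $F_\varphi$ on the compact set $K=\{(M,m):|M|,|m|\le R,\ m\le\mu(u)\le M\}$, with $R$ from the uniform $H^1$-bound. A compactness argument on $K$ using the strict negativity and non-degeneracy of $F_\varphi$ off $(M_\varphi,m_\varphi)$ shows: for every $\epsilon'>0$ there is $\delta>0$ such that $F_{u(t)}(M,m)\ge0$ forces $|M-1|+|m-23/26|<\epsilon'$. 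By (\ref{Fgeqzero}) this applies to $(M_{u(t)},m_{u(t)})$, so $\max u(t)\to1$ uniformly in $t$ as $\delta\to0$. For the final step I translate so that $\max u(t)$ sits at $x=1/2$; the Dirac identity gives $\langle u_x(t),\varphi_x\rangle_{L^2}=-\langle u(t),\varphi_{xx}\rangle=\tfrac{12}{13}(\max u(t)-\mu(u))$, so $\|u_x(t)-\varphi_x\|_{L^2}^2$ is an explicit continuous function of $H_1[u],\mu(u),\max u(t)$ that vanishes at the peakon, and $\|u(t)-\varphi\|_{L^2}^2\le(\mu(u)-\mu(\varphi))^2+\tfrac1{4\pi^2}\|u_x(t)-\varphi_x\|_{L^2}^2$ by Poincar\'e applied to $u(t)-\varphi$; adding the two yields $\|u(t)-\varphi(\cdot-\xi(t)+1/2)\|_{H^1}<\epsilon$.

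I expect two steps to be the real work. The first is finding the right $g$ and weight so that $H_2[w]\ge Q_w(M,m)$ is sharp at the peakon: the operator $A=\mu-\partial_x^2$ pulls both $\mu(w)$ and the \emph{non-conserved} quantity $\|w\|_{L^2}^2$ into $Q_w$, a complication absent on the line and in the Camassa--Holm case. The second, and the genuinely new point relative to \cite{le1,C-S}, is controlling the time-dependence of $F_{u(t)}$ --- showing that the bound on $\|u(t)\|_{L^2}^2$ furnished by $H_1$ is tight enough for $F_{u(t)}$ to remain close to $F_\varphi$ uniformly for $t\in[0,T)$, despite $\|u(t)\|_{L^2}^2$ not being conserved.
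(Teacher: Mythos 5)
Your plan reproduces the paper's strategy essentially step for step: the same two-arc construction of a quadratic quantity $g$ built from $w_x$ and $\sqrt{w-m}$, the same function $F_w(M,m)$ depending on $w$ only through $H_0,H_1,H_2$ and $\|w\|_{L^2}^2$, the same observation that the non-conserved $\|w\|_{L^2}^2$ is controlled by the conserved $H_1$ via the equivalence of the $\mu$-norm and the $H^1$-norm, and the same closing move (your computation of $\|u_x-\varphi_x\|_{L^2}^2$ from $\langle u,\varphi_{xx}\rangle=\frac{12}{13}(\mu(u)-\max u)$ is exactly the paper's Lemma \ref{lm_H1est} in disguise, with Poincar\'e substituting for the paper's estimate $\max|f|\le\sqrt{13/12}\,\|f\|_\mu$). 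However, the technical heart of the proof --- the explicit choice $g=u_x\pm\frac{12}{13}\sqrt{\frac{13}{6}(u-m)}$, the evaluation of $\int g^2$ and $\int ug^2$, and the verification that the resulting $F_\varphi$ vanishes to second order at $(M_\varphi,m_\varphi)$ with Hessian $-\frac{12}{13}I$ --- is left entirely as a description of what you ``would'' do. That is precisely the part you correctly flag as ``the real work,'' so the proposal is a faithful skeleton rather than a proof. One detail in your sketch is off: the boundary terms are not evaluated using $w_x(\xi)=w_x(\eta)=0$ (which is not even meaningful for a general $H^1$ function, and fails for the peakon itself); they come from the fundamental theorem of calculus applied to explicit antiderivatives such as $(u-m)^{3/2}$, evaluated at points where $u=M$ or $u=m$.

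The one genuinely different (and riskier) choice is your perturbation step. You propose a compactness argument on a set $K$ containing all possible $(M_{u(t)},m_{u(t)})$, which requires the \emph{global} statement that $F_\varphi<0$ on $K\setminus\{(M_\varphi,m_\varphi)\}$. The paper asserts global negativity in its outline but never proves or uses it: its Lemma \ref{lm_shape} only uses the local information from Lemma \ref{lm_derivativesF} (vanishing gradient, negative-definite Hessian), and then closes the argument with the continuity of $t\mapsto(M_{u(t)},m_{u(t)})$ (Lemma \ref{lm_contMm}) --- the trajectory starts inside the small neighborhood and cannot leave it without crossing the surrounding annulus where $F_{u(t)}<0$, contradicting \eqref{Fgeqzero}. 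Your route dispenses with the continuity-in-$t$ lemma but at the price of an extra global verification: after simplification $F_\varphi(M,m)=\frac{150}{169}M-\frac{12}{13}Mm+\frac{144}{169}m-\frac{16}{5}\sqrt{\frac{2}{39}}(M-m)^{5/2}-\frac{9024}{10985}$, which involves a fractional power and admits no obvious two-variable analogue of the clean factorization $-\frac23(M-1)^2(M+2)$ from the line case \cite{C-S}; the inequality appears to be true on the relevant region but would have to be established by hand. Either supply that global estimate or fall back on the paper's local-plus-continuity argument.
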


The proof of theorem \ref{thm_stabprecise} will proceed through a series of lemmas. The first lemma summarizes the properties of the peakon. For simplicity we henceforth take $c=1$.

\begin{lemma}\label{peakonlemma}
The peakon $\varphi(x)$ is continuous on $S^1$ with peak at $x=\pm 1/2$. The extrema of $\varphi$ are
$$ M_{\varphi} = \varphi (1/2) = 1, \qquad
m_{\varphi} = \varphi (0) = \frac{23}{26}.$$
Moreover,
$$\lim_{x \uparrow 1/2} \varphi_x(x) = \frac{6}{13}, \qquad \lim_{x \downarrow -1/2} \varphi_x(x) = -\frac{6}{13},$$
and
\begin{align*}
& H_0[\varphi]
%= \int_{-1/2}^{1/2} \frac{12x^2 + 23}{26} dx
= \frac{12}{13}, \qquad
H_1[\varphi]
%= \frac{1}{2}\left(\mu(\varphi)^2 + \int_{S^1} \varphi_x^2 dx\right)
= \max_{x \in S^1} \varphi_x = \frac{6}{13}, \qquad
H_2[\varphi]
%= \int_{S^1} \left( \mu(\varphi) \varphi^2 + \frac{1}{2}\varphi \varphi_x^2\right) dx
= \frac{9024}{10985}.
\end{align*}
\end{lemma}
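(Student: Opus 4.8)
The plan is entirely computational: with $\varphi$ given explicitly by $\varphi(x)=\frac1{26}(12x^2+23)$ on $[-1/2,1/2]$ and extended periodically with period one, every assertion reduces to evaluating a polynomial, its one-sided limits at $x=\pm1/2$, and a handful of elementary integrals over $[-1/2,1/2]$.

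First I would record the pointwise data. On $(-1/2,1/2)$ one has $\varphi_x(x)=\frac{12x}{13}$ and $\varphi_{xx}(x)=\frac{12}{13}>0$, so $\varphi$ is strictly convex there; hence on $[-1/2,1/2]$ its minimum is attained at the interior critical point $x=0$ and its maximum at $x=\pm1/2$. Substituting gives $m_\varphi=\varphi(0)=\frac{23}{26}$ and $\varphi(\pm1/2)=\frac1{26}(3+23)=1$, so in particular $\varphi(-1/2)=\varphi(1/2)$, the periodic extension is continuous on $S^1$, and $M_\varphi=1$. The one-sided slopes follow from $\varphi_x=\frac{12x}{13}$: $\lim_{x\uparrow1/2}\varphi_x=\frac6{13}$ and $\lim_{x\downarrow-1/2}\varphi_x=-\frac6{13}$; since by periodicity the right-hand derivative of $\varphi$ at $x=1/2$ equals the one at $x=-1/2$, namely $-\frac6{13}\ne\frac6{13}$, there is a genuine corner at $x=\pm1/2$, i.e.\ a peak. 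Finally $\varphi_x$ is increasing on each periodicity cell, so $\max_{x\in S^1}\varphi_x=\frac6{13}$.

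Next I would evaluate the three conserved functionals. For $H_0$: $H_0[\varphi]=\int_{-1/2}^{1/2}\varphi\,dx=\frac1{26}\bigl(12\cdot\frac1{12}+23\bigr)=\frac{12}{13}$, which is also the value of the constant $\mu(\varphi)$. For $H_1$ the one subtle point is the distributional identity $m=A\varphi=\mu(\varphi)-\varphi_{xx}$: on $(-1/2,1/2)$ we have $\varphi_{xx}=\frac{12}{13}$, and the jump $-\frac{12}{13}$ of $\varphi_x$ at each peak contributes a term $-\frac{12}{13}\delta$, so $m=\frac{12}{13}-\bigl(\frac{12}{13}-\frac{12}{13}\delta\bigr)=\frac{12}{13}\delta$ is a Dirac mass located at the peak. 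Hence $H_1[\varphi]=\frac12\int m\varphi\,dx=\frac12\cdot\frac{12}{13}\cdot\varphi(1/2)=\frac6{13}$; as a check, integrating by parts gives $H_1[u]=\frac12\bigl(\mu(u)^2+\|u_x\|_{L^2}^2\bigr)$, and with $\|\varphi_x\|_{L^2}^2=\frac{144}{169}\cdot\frac1{12}=\frac{12}{169}$ this again yields $\frac12\bigl(\frac{144}{169}+\frac{12}{169}\bigr)=\frac6{13}$, which coincides with $\max\varphi_x$ precisely because $M_\varphi=1$. For $H_2=\int\bigl(\mu(\varphi)\varphi^2+\frac12\varphi\varphi_x^2\bigr)dx$ I would compute $\int_{-1/2}^{1/2}\varphi^2\,dx=\frac1{676}\int(144x^4+552x^2+529)\,dx=\frac{721}{845}$ and $\int_{-1/2}^{1/2}\varphi\varphi_x^2\,dx=\frac{144}{4394}\int(12x^4+23x^2)\,dx=\frac{744}{10985}$, giving $H_2[\varphi]=\frac{12}{13}\cdot\frac{721}{845}+\frac12\cdot\frac{744}{10985}=\frac{8652+372}{10985}=\frac{9024}{10985}$.

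There is no real obstacle here: the lemma is a bookkeeping exercise. The only place where a slip is easy is in computing $m$, and hence $H_1$, for the peakon, where one must not drop the $\delta$-contribution of the corner to $\varphi_{xx}$; carrying out the two independent evaluations of $H_1$ (directly against the $\delta$-mass and via integration by parts) serves as a built-in consistency check on the arithmetic.
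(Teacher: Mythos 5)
Your proposal is correct and follows exactly the route the paper intends: direct evaluation of the explicit polynomial formula for $\varphi$, its one-sided derivatives at $\pm 1/2$, and the elementary integrals defining $H_0$, $H_1$, $H_2$ (the paper's proof is a one-line remark with only the $H_0$ computation shown, and your treatment of $m = \frac{12}{13}\delta(x-1/2)$ matches the identity \eqref{eqn_phi_xx} the paper uses later). All of your arithmetic checks out, including the values $\frac{721}{845}$, $\frac{744}{10985}$, and $\frac{9024}{10985}$.
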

\begin{proof}
These properties follow easily from the definition (\ref{varphipeakon}) of $\varphi$ and the definition (\ref{conservationlaws}) of $\{H_i\}_1^3$. For example,
$$H_0[\varphi] = \int_{-1/2}^{1/2} \frac{12x^2 + 23}{26} dx = \frac{12}{13}.$$
\end{proof}

We define the $\mu$-inner product $\langle \cdot, \cdot \rangle_{\mu}$ and the associated $\mu$-norm $\| \cdot \|_\mu$ by
\begin{equation}\label{muinnernorm}
\langle u, v \rangle_\mu = \mu(u)\mu(v) + \int_{S^1} u_xv_x dx, \qquad \| u \|_\mu^2 = \langle u, u \rangle_\mu = 2H_1[u], \qquad u,v \in H^1(S^1),
\end{equation}
and consider the expansion of the conservation law $H_1$ around the peakon $\varphi$ in the $\mu$-norm. The following lemma shows that the error term in this expansion is given by $12/13$ times the difference between $\varphi$ and the perturbed solution $u$ at the point of the peak.
%the error term is in the form of difference of the maxima of $\varphi$ and the perturbed solution $u$.

\begin{lemma}\label{lm_H1est}
For every $u\in H^1({S^1})$ and $\xi \in \R$,
$$ H_1[u]-H_1[\varphi] = \frac{1}{2} \| u- \varphi (\cdot - \xi)\|^2_{\mu}
+ \frac{12}{13}(u(\xi + 1/2) - M_\varphi).$$
\end{lemma}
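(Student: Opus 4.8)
The plan is to reduce the identity to a single ``reproducing'' property of the peakon profile, and the whole argument is essentially the expansion of a square plus one integration by parts. Recall from (\ref{muinnernorm}) that $H_1[u] = \tfrac12\|u\|_\mu^2$. Hence, for any $\xi$,
\[
\tfrac12\|u-\varphi(\cdot-\xi)\|_\mu^2 = H_1[u] - \langle u,\varphi(\cdot-\xi)\rangle_\mu + H_1[\varphi],
\]
so that
\[
H_1[u]-H_1[\varphi] = \tfrac12\|u-\varphi(\cdot-\xi)\|_\mu^2 + \langle u,\varphi(\cdot-\xi)\rangle_\mu - 2H_1[\varphi].
\]
Since $2H_1[\varphi] = \|\varphi(\cdot-\xi)\|_\mu^2 = \langle \varphi(\cdot-\xi),\varphi(\cdot-\xi)\rangle_\mu$, everything follows once we establish the pointwise formula
\[
\langle v,\varphi(\cdot-\xi)\rangle_\mu = \tfrac{12}{13}\,v(\xi+1/2)\qquad\text{for all }v\in H^1(S^1),
\]
because then the middle term is $\tfrac{12}{13}u(\xi+1/2)$, the last term is $\tfrac{12}{13}\varphi(1/2) = \tfrac{12}{13}M_\varphi$ (using $\varphi(\cdot-\xi)$ evaluated at $\xi+1/2$), and subtracting gives exactly $\tfrac{12}{13}(u(\xi+1/2)-M_\varphi)$.

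To prove the reproducing formula, first note $\mu(\varphi(\cdot-\xi)) = \mu(\varphi) = H_0[\varphi] = \tfrac{12}{13}$ by lemma \ref{peakonlemma}, so
\[
\langle v,\varphi(\cdot-\xi)\rangle_\mu = \tfrac{12}{13}\mu(v) + \int_{S^1} v_x\,\partial_x\big(\varphi(\cdot-\xi)\big)\,dx.
\]
From (\ref{varphipeakon}) one has $\varphi_x(x) = \tfrac{12x}{13}$ and $\varphi_{xx}(x) = \tfrac{12}{13}$ (constant) on $(-1/2,1/2)$, while $\varphi_x$ has a jump of size $-\tfrac{12}{13}$ at the peak. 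Cutting $S^1$ at the singular point $\xi+1/2$ and integrating by parts over the remaining (smooth) interval, the interior term produces $-\int_{S^1} v\,\partial_x^2\varphi(\cdot-\xi)\,dx = -\tfrac{12}{13}\mu(v)$, which cancels the $\tfrac{12}{13}\mu(v)$ above, and the boundary term collapses — using periodicity and continuity of $v$ together with the one-sided limits $\varphi_x(1/2^-)=\tfrac{6}{13}$, $\varphi_x(-1/2^+)=-\tfrac{6}{13}$ from lemma \ref{peakonlemma} — to $v(\xi+1/2)\big(\tfrac{6}{13}-(-\tfrac{6}{13})\big) = \tfrac{12}{13}v(\xi+1/2)$. (Equivalently, one may record that $A\varphi = \mu(\varphi)-\varphi_{xx} = \tfrac{12}{13}\delta$ in $\mathcal D'(S^1)$, whence $\langle v,\varphi(\cdot-\xi)\rangle_\mu = \langle A\varphi(\cdot-\xi),v\rangle = \tfrac{12}{13}v(\xi+1/2)$.)

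The only point requiring care is the boundary term in the integration by parts: because $\varphi$ has a genuine corner at the peak, $\varphi_x$ is only piecewise smooth and its jump there is precisely what supplies the $\tfrac{12}{13}v(\xi+1/2)$ contribution; the fact that $\varphi_{xx}$ is \emph{constant} on the smooth piece is what makes the two $\mu(v)$-terms cancel and leaves no residual integral. Everything else — the expansion of $\|u-\varphi(\cdot-\xi)\|_\mu^2$ and the substitution of the numerical values of $H_0[\varphi]$, $H_1[\varphi]$, $M_\varphi$ — is routine bookkeeping with the formulas already collected in lemma \ref{peakonlemma}.
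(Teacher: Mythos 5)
Your proof is correct and follows essentially the same route as the paper: expand the $\mu$-norm of the difference and evaluate the cross term via integration by parts against $\varphi_{xx} = \frac{12}{13} - \frac{12}{13}\delta(x-1/2)$; your ``reproducing formula'' $\langle v,\varphi(\cdot-\xi)\rangle_\mu = \frac{12}{13}v(\xi+1/2)$ is exactly the computation the paper carries out here (and isolates again in lemma \ref{lm_maxf}). The only cosmetic difference is that you package the cross term as a reproducing-kernel evaluation before inserting the numerical values, whereas the paper substitutes $\mu(\varphi)=\frac{12}{13}$ and $H_1[\varphi]=\frac{6}{13}$ directly.
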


\begin{proof}
We compute
\begin{align*}
\frac{1}{2} \| u- \varphi (\cdot - \xi)\|^2_{\mu}
&= H_1[u] + H_1[\varphi (\cdot - \xi)] - \mu(u)\mu(\varphi) - \int_{S^1} {u_x(x)\varphi_x(x - \xi)dx}
	\\
& = H_1[u] + H_1[\varphi] - \mu(u) \mu(\varphi) + \int_{S^1} u(x+\xi)\varphi_{xx}(x)dx.
\end{align*}
Since
\begin{equation}\label{eqn_phi_xx}
 \varphi_{xx}= \frac{12}{13} - \frac{12}{13}\delta(x - 1/2),
\end{equation}
we find
$$\int_{S^1} {u(x+\xi)\varphi_{xx}(x)dx}= \frac{12}{13} \int_{S^1} u(x) dx - \frac{12}{13} u(\xi +1/2).$$
Using that $H_0[\varphi]=\mu(\varphi)= \frac{12}{13}$, we obtain
$$\frac{1}{2} \| u- \varphi (\cdot - \xi)\|^2_{\mu}
=H_1[u] - H_1[\varphi] + \frac{12}{13}(1 - u(\xi + 1/2)).$$
This proves the lemma.
\end{proof}

\begin{remark}
For a wave profile $u \in H^1({S^1})$, the functional $H_1[u]$ represents
kinetic energy. Lemma \ref{lm_H1est} implies that if a wave $u \in H^1({S^1})$ has energy $H_1[u]$ and height $M_u$ close to the peakon's energy and height, then the whole shape of $u$ is close to that of the peakon. Another physically relevant consequence of lemma \ref{lm_H1est} is that among all waves of fixed energy, the peakon has maximal height. Indeed, if $u \in H^1({S^1}) \subset C({S^1})$ is such that $H_1[u] = H_1[\varphi]$ and $u(\xi) = \max_{x \in {S^1}} u(x)$, then $u(\xi) \leq M_{\varphi}$.
\end{remark}

The peakon $\varphi$ satisfies the differential equation
\begin{equation} \label{peakondiff}
   \varphi_x = \left\{
   \begin{array}{ll}
     -\frac{12}{13} \sqrt{\frac{13}{6}(\varphi  - m_\varphi)}  & \quad -1/2 < x \leq 0, \\
       \frac{12}{13} \sqrt{\frac{13}{6}(\varphi  - m_\varphi)} & \quad  0 \leq x< 1/2.
   \end{array} \right.
\end{equation}
Let $u \in H^1({S^1}) \subset C({S^1})$ and write $M = M_u =
\max_{x \in {S^1}} \{u(x)\}$,
$m = m_u = \min_{x \in {S^1}}\{u(x)\}$. Let $\xi$ and $\eta$ be such
that $u(\xi)=M$ and $u(\eta)=m$. Inspired by (\ref{peakondiff}),
we define the real-valued function $g(x)$ by
\begin{displaymath}
   g(x) = \left\{
   \begin{array}{ll}
     u_x + \frac{12}{13} \sqrt{\frac{13}{6}(u  - m)} & \quad \xi < x \leq \eta, \\
     u_x - \frac{12}{13}\sqrt{\frac{13}{6}(u  - m)} & \quad \eta \leq x < \xi+1,
   \end{array} \right.
\end{displaymath}
and extend it periodically to the real line. We compute
\begin{align*}
\int_{S^1} {g^2(x) dx}  = &\; \int_{\xi}^{\eta} \left(u_x + \frac{12}{13} \sqrt{\frac{13}{6}(u  - m)}\right)^2 dx
  + \int_{\eta}^{\xi+1} \left(u_x - \frac{12}{13} \sqrt{\frac{13}{6}(u  - m)}\right)^2 dx
  	\\
 =& \; \int_{\xi}^{\eta} {u_x^2 dx} + \frac{24}{13} \int_{\xi}^{\eta} u_x  \sqrt{\frac{13}{6}(u  - m)} dx
+\frac{144}{169} \int_{\xi}^{\eta} \frac{13}{6}(u  - m) dx
	\\
& + \int_{\eta}^{\xi+1} {u_x^2 dx}
- \frac{24}{13}\int_{\eta}^{\xi+1} u_x \sqrt{\frac{13}{6}(u  - m)} dx
  + \frac{144}{169} \int_{\eta}^{\xi+1} \frac{13}{6}(u  - m) dx.
 \end{align*}
Notice that
$$\frac{d}{dx}\left[ 8 \sqrt{\frac{2}{39}} (u - m)^{3/2}\right]
   = \frac{24}{13} u_x \sqrt{\frac{13}{6}(u  - m)} .$$
Hence,
$$\int_{\xi}^{\eta} u_x  \sqrt{\frac{13}{6}(u  - m)}  dx
= -\int_{\eta}^{\xi +1} u_x  \sqrt{\frac{13}{6}(u  - m)} dx$$
and
$$ \frac{24}{13} \int_{\xi}^{\eta}u_x \sqrt{\frac{13}{6}(u  - m)} dx
= \biggl[8 \sqrt{\frac{2}{39}} (u - m)^{3/2}\biggr]_{\xi}^{\eta} = - 8 \sqrt{\frac{2}{39}} (M-m)^{3/2}.$$
We conclude that
\begin{align}\label{gsquare}
   \frac{1}{2} \int_{S^1} {g^2(x) dx}
     =  H_1[u] - \frac{1}{2}\mu(u)^2 - 8 \sqrt{\frac{2}{39}} (M-m)^{3/2}
     + \frac{12}{13}(\mu(u) - m).
\end{align}

In the same way, we compute
\begin{align*}
\int_{S^1} u & g^2(x) dx
	\\
= &\; \int_{\xi}^{\eta} {u\left(u_x + \frac{12}{13}\sqrt{\frac{13}{6}(u  - m)}\right)^2 dx}
+ \int_{\eta}^{\xi+1} {u\left(u_x - \frac{12}{13} \sqrt{\frac{13}{6}(u  - m)} \right)^2 dx}
	\\
=&\; \int_{\xi}^{\eta} {u u_x^2 dx} + \frac{24}{13}\int_{\xi}^{\eta} u u_x \sqrt{\frac{13}{6}(u  - m)} dx
+ \frac{144}{169}  \int_{\xi}^{\eta} u \frac{13}{6}(u  - m) dx
	\\
& + \int_{\eta}^{\xi+1} {u u_x^2 dx} - \frac{24}{13}\int_{\eta}^{\xi+1} {u u_x \sqrt{\frac{13}{6}(u  - m)} dx}
+ \frac{144}{169}  \int_{\eta}^{\xi+1} u \frac{13}{6}(u  - m) dx.
\end{align*}
Since
$$\frac{d}{dx}\left[\frac{8}{5} \sqrt{\frac{2}{39}} (u -m)^{3/2} (2 m+3 u)\right] = \frac{24}{13} u u_x \sqrt{\frac{13}{6}(u  - m)},$$
we find
$$ \int_{\xi}^{\eta} u u_x  \sqrt{\frac{13}{6}(u  - m)} dx
= -\int_{\eta}^{\xi +1} u u_x  \sqrt{\frac{13}{6}(u  - m)} dx$$
and
\begin{align*}
\frac{24}{13} \int_{\xi}^{\eta} u u_x \sqrt{\frac{13}{6}(u  - m)} dx
= - \frac{8}{5} \sqrt{\frac{2}{39}} (M -m)^{3/2} (2 m+3 M).
 \end{align*}
Therefore,
\begin{align}\label{ugsquare}
  \frac{1}{2} \int_{S^1} {u g^2(x) dx} = &\;  H_2[u] - \left(H_0[u] - \frac{12}{13}\right) \int_{S^1} u^2 dx
   - \frac{12}{13} m H_0[u]
   	\\ \nonumber
   &- \frac{8}{5} \sqrt{\frac{2}{39}} (M -m)^{3/2} (2 m+3 M).
\end{align}
Combining (\ref{ugsquare}) with (\ref{gsquare}), we find
\begin{align} \nonumber
   H_2[u] = &\; \frac{1}{2} \int_{S^1} {u g^2(x) dx}
+ \left(H_0[u] - \frac{12}{13}\right) \int_{S^1} u^2 dx
+ \frac{12}{13} m H_0[u]
	\\ \nonumber
& + \frac{8}{5} \sqrt{\frac{2}{39}} (M -m)^{3/2} (2 m+3 M)
      	\\ \label{ineq}
    \leq &\; \frac{M}{2} \int_{S^1} {g^2(x) dx}
   + \left(H_0[u] - \frac{12}{13}\right) \int_{S^1} u^2 dx
+ \frac{12}{13} m H_0[u]
	\\ \nonumber
& + \frac{8}{5} \sqrt{\frac{2}{39}} (M -m)^{3/2} (2 m+3 M)
	\\ \nonumber
  =&\; M \biggl [ H_1[u] - \frac{1}{2}\mu(u)^2 - 8 \sqrt{\frac{2}{39}} (M-m)^{3/2}
     + \frac{12}{13}(\mu(u) - m) \biggr]
     	\\ \nonumber
&   + \left(H_0[u] - \frac{12}{13}\right) \int_{S^1} u^2 dx
+ \frac{12}{13} m H_0[u]
+ \frac{8}{5} \sqrt{\frac{2}{39}} (M -m)^{3/2} (2 m+3 M).
   \end{align}
We have actually proved the following lemma.
\begin{lemma}\label{lm_LyapunovF}
   For any positive $u \in H^1({S^1})$, define a function
$$ {F_u:\{(M,m)\in \R^2:\, M \geq m > 0\} \rightarrow \R} $$
by
\begin{align*}
 F_u(M ,m) = &\; M \biggl [ H_1[u] - \frac{1}{2}H_0[u]^2 - 8 \sqrt{\frac{2}{39}} (M-m)^{3/2}
     + \frac{12}{13}(H_0[u] - m) \biggr]
     	\\ \nonumber
&   + \left(H_0[u] - \frac{12}{13}\right) \int_{S^1} u^2 dx
+ \frac{12}{13} m H_0[u]
	\\ \nonumber
& + \frac{8}{5} \sqrt{\frac{2}{39}} (M -m)^{3/2} (2 m+3 M) - H_2[u].
\end{align*}
Then
$$F_u(M_u ,m_u) \geq 0,$$
where $M_u= \max_{x \in {S^1}} \{u(x)\}$ and $m_u= \min_{x \in {S^1}}\{u(x)\}.$
\end{lemma}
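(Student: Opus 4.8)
The plan is to read the inequality straight off the two integral identities that precede the lemma statement. First I would record that the auxiliary function $g$, defined just before the lemma from $u_x$ and $\sqrt{\tfrac{13}{6}(u-m)}$, lies in $L^2(S^1)$: this uses only $u_x \in L^2(S^1)$ together with $u - m \geq 0$ on $S^1$, so that the square roots are real and bounded. Translating $u$ if necessary, I may take the minimum point to satisfy $\eta \in [\xi,\xi+1)$, which legitimises splitting the period integral into $\int_\xi^\eta$ and $\int_\eta^{\xi+1}$; periodicity of $g$ then makes $\int_{S^1} g^2$ and $\int_{S^1} u g^2$ independent of the chosen representatives. The positivity hypothesis ($m>0$) is only needed to ensure that $(M_u,m_u)$ lies in the stated domain of $F_u$.

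The computational heart is the pair of identities \eqref{gsquare} and \eqref{ugsquare}. Expanding the squares in $g^2$ and in $u g^2$, the cross terms are exact derivatives: $\tfrac{24}{13}u_x\sqrt{\tfrac{13}{6}(u-m)} = \tfrac{d}{dx}\big[8\sqrt{\tfrac{2}{39}}(u-m)^{3/2}\big]$ and the analogous identity with the extra weight $u$, namely $\tfrac{24}{13}u u_x\sqrt{\tfrac{13}{6}(u-m)} = \tfrac{d}{dx}\big[\tfrac85\sqrt{\tfrac{2}{39}}(u-m)^{3/2}(2m+3u)\big]$. Hence each cross term, integrated over a full period, equals the jump of an explicit function between $u=M$ and $u=m$, producing the $(M-m)^{3/2}$ contributions, while the remaining terms reassemble into $H_1[u]$, $H_2[u]$, $\mu(u)$ and $\int_{S^1}u^2\,dx$ after invoking the definitions \eqref{conservationlaws}. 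I consider verifying these antiderivative identities and collecting the rational constants $\tfrac{12}{13}$, $\tfrac{144}{169}$, $8\sqrt{2/39}$, $\tfrac85\sqrt{2/39}$ to be the only real content of the proof; it is routine but must be done with care, in particular making sure the sign of the cross term over each subinterval is consistent with the orientation in which $u$ runs from $M$ down to $m$ and back.

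With \eqref{gsquare} and \eqref{ugsquare} established, the conclusion is immediate: solving \eqref{ugsquare} for $H_2[u]$, using the pointwise bound $u(x) g(x)^2 \leq M g(x)^2$ (valid because $M - u(x)\geq 0$ and $g(x)^2\geq 0$ for every $x\in S^1$) to get $\tfrac12\int_{S^1} u g^2\,dx \leq \tfrac M2\int_{S^1} g^2\,dx$, then substituting the value of $\tfrac12\int_{S^1}g^2\,dx$ from \eqref{gsquare} and writing $\mu(u)=H_0[u]$, turns the right-hand side into exactly $F_u(M,m)+H_2[u]$ evaluated at $(M,m)=(M_u,m_u)$ — this is precisely the chain of (in)equalities displayed in \eqref{ineq}. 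Rearranging gives $F_u(M_u,m_u)\geq 0$. There is essentially no hard step; the argument rests entirely on the two total-derivative identities and the elementary bound $u\leq M_u$, and the only place demanding vigilance is the bookkeeping of numerical coefficients and signs.
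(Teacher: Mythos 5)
Your proposal is correct and follows essentially the same route as the paper: the paper's ``proof'' of this lemma is precisely the computation preceding it, namely the identities (\ref{gsquare}) and (\ref{ugsquare}) obtained from the total-derivative form of the cross terms, followed by the pointwise bound $u \leq M$ in (\ref{ineq}) and a rearrangement. Your additional remarks on the square-integrability of $g$ and the legitimacy of splitting the period integral are sensible housekeeping that the paper leaves implicit.
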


Note that the function $F_{u}$ depends on $u$ only through the three conservation laws $H_0[u]$, $H_1[u]$, and $H_2[u]$, and the $L^2$-norm of $u$.

\begin{figure}
   \begin{center}
     \includegraphics{graphF.pdf} \\
         \begin{figuretext}\label{graphF.pdf}
The graph of the function $F_{\varphi}(M,m)$ near the point $(M_{\varphi},m_{\varphi}).$
     \end{figuretext}
   \end{center}
\end{figure}

The next lemma highlights some properties of the function $ F_{\varphi}(M ,m)$
associated to the peakon. The graph of $F_{\varphi}(M,m)$ is shown in figure \ref{graphF.pdf}.

\begin{lemma}\label{lm_derivativesF}
   For the peakon $\varphi$, we have
   $$ F_{\varphi}(M_{\varphi},m_{\varphi}) = 0,$$
   $$ \frac{\partial F_{\varphi}}{\partial M}(M_{\varphi} ,m_{\varphi}) = 0,
   \qquad \frac{\partial F_{\varphi}}{\partial m}(M_{\varphi}
,m_{\varphi}) = 0,$$
$$ \frac{\partial^2 F_{\varphi}}{\partial M^2}(M_{\varphi} ,m_{\varphi}) = -{12\over13},
\qquad \frac{\partial^2 F_{\varphi}}{\partial M \partial
m}(M_{\varphi} ,m_{\varphi}) = 0,
\qquad \frac{\partial^2 F_{\varphi}}{\partial m^2}(M_{\varphi}
,m_{\varphi}) = -{12\over13}.$$
\end{lemma}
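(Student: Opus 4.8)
The plan is to reduce $F_\varphi$ to a completely explicit elementary function of $(M,m)$ and then verify the six identities by differentiating it and evaluating at $(M_\varphi,m_\varphi)=(1,23/26)$. First I would substitute into the formula of Lemma \ref{lm_LyapunovF} the values $H_0[\varphi]=12/13$, $H_1[\varphi]=6/13$, $H_2[\varphi]=9024/10985$ supplied by Lemma \ref{peakonlemma}. The crucial simplification is that the term $\bigl(H_0[u]-\tfrac{12}{13}\bigr)\int_{S^1}u^2\,dx$ has coefficient $H_0[\varphi]-\tfrac{12}{13}=0$, so the $L^2$-norm of $\varphi$ drops out entirely. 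The two terms proportional to $(M-m)^{3/2}$, namely $-8M\sqrt{2/39}\,(M-m)^{3/2}$ coming from the bracket and $\tfrac{8}{5}\sqrt{2/39}\,(M-m)^{3/2}(2m+3M)$, combine because $-8M+\tfrac{8}{5}(2m+3M)=-\tfrac{16}{5}(M-m)$, producing a single term $-\tfrac{16}{5}\sqrt{2/39}\,(M-m)^{5/2}$. After collecting the rational part one is left with the closed form
\begin{equation*}
  F_\varphi(M,m)=\frac{150}{169}\,M-\frac{12}{13}\,Mm+\frac{144}{169}\,m-\frac{9024}{10985}-\frac{16}{5}\sqrt{\frac{2}{39}}\,(M-m)^{5/2}.
\end{equation*}

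Next I would differentiate this expression, which is routine. Writing $\kappa=\sqrt{2/39}$,
\begin{align*}
 \partial_M F_\varphi(M,m) &= \frac{150}{169}-\frac{12}{13}m-8\kappa(M-m)^{3/2}, \\
 \partial_m F_\varphi(M,m) &= -\frac{12}{13}M+\frac{144}{169}+8\kappa(M-m)^{3/2},
\end{align*}
and then $\partial_M^2F_\varphi=\partial_m^2F_\varphi=-12\kappa(M-m)^{1/2}$ while $\partial_M\partial_mF_\varphi=-\tfrac{12}{13}+12\kappa(M-m)^{1/2}$. All six claimed identities now follow by evaluating at $(M_\varphi,m_\varphi)=(1,23/26)$, where $M_\varphi-m_\varphi=3/26$. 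The only arithmetic needed is the observation that $\kappa\,(3/26)^{1/2}=\sqrt{\tfrac{2}{39}\cdot\tfrac{3}{26}}=\sqrt{\tfrac{1}{169}}=\tfrac{1}{13}$, whence $8\kappa(3/26)^{3/2}=8\cdot\tfrac{1}{13}\cdot\tfrac{3}{26}=\tfrac{12}{169}$ and $12\kappa(3/26)^{1/2}=\tfrac{12}{13}$. With these, $\partial_M^2F_\varphi(M_\varphi,m_\varphi)=\partial_m^2F_\varphi(M_\varphi,m_\varphi)=-\tfrac{12}{13}$ and $\partial_M\partial_mF_\varphi(M_\varphi,m_\varphi)=0$ are immediate; vanishing of the first partials reduces to $\tfrac{150}{169}-\tfrac{138}{169}-\tfrac{12}{169}=0$ and $-\tfrac{156}{169}+\tfrac{144}{169}+\tfrac{12}{169}=0$; and $F_\varphi(M_\varphi,m_\varphi)=0$ reduces to checking $\tfrac{1812}{2197}-\tfrac{9024}{10985}-\tfrac{36}{10985}=\tfrac{9060}{10985}-\tfrac{9060}{10985}=0$.

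There is essentially no obstacle here — the lemma is a bookkeeping computation — but the one place to be careful is the algebraic simplification in the first step: one must correctly merge the two $(M-m)^{3/2}$ contributions and keep track of the many factors of $13$ and $26$. As a conceptual check on the identity $F_\varphi(M_\varphi,m_\varphi)=0$, note that it also follows without computation from the derivation preceding Lemma \ref{lm_LyapunovF}: for $u=\varphi$ the auxiliary function $g$ vanishes identically by the peakon ODE (\ref{peakondiff}), and at $(M,m)=(M_u,m_u)$ the chain (\ref{gsquare})--(\ref{ugsquare})--(\ref{ineq}) collapses to $F_u(M_u,m_u)=\tfrac12\int_{S^1}(M_u-u)g^2\,dx$, which is $0$. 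One could in principle also recover the first-derivative identities by differentiating this integral representation, but since $g$ depends on $m$ and on the locations of the extrema, the direct computation with the closed form above is cleaner and is the route I would take.
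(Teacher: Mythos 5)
Your proposal is correct and follows essentially the same route as the paper: a direct differentiation of the explicit formula for $F_\varphi$ followed by evaluation at $(M_\varphi,m_\varphi)=(1,23/26)$, with your derivative formulas and arithmetic (in particular $\sqrt{\tfrac{2}{39}\cdot\tfrac{3}{26}}=\tfrac{1}{13}$) matching the paper's after its symbolic simplifications. The only cosmetic difference is that the paper differentiates the general $F_u$ before substituting the peakon's conserved quantities, and it establishes $F_\varphi(M_\varphi,m_\varphi)=0$ solely via the $g\equiv 0$ observation that you include as a secondary check.
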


\begin{proof}
It follows from (\ref{peakondiff}) that the function $g(x)$
corresponding
to the peakon is identically zero. Thus the inequality (\ref{ineq}) is an
equality in the case of the peakon. This means that
$ F_{\varphi}(M_{\varphi},m_{\varphi}) = 0.$

On the other hand, differentiation gives
\begin{align*}
\frac{\partial F_u}{\partial M} & = \biggl [ H_1[u] - \frac{1}{2}H_0[u]^2 - 8 \sqrt{\frac{2}{39}} (M-m)^{3/2}
     + \frac{12}{13}(H_0[u] - m) \biggr]
          	\\ \nonumber
	&\quad\   -12\sqrt{{2\over39}}M(M-m)^{1/2}+{12\over5}\sqrt{2\over39}(M-m)^{1/2}(2m+3M)+{24\over5}\sqrt{2\over39}(M-m)^{3/2}\\ \nonumber
	& = \biggl [ H_1[u] - \frac{1}{2}H_0[u]^2 - 8 \sqrt{\frac{2}{39}} (M-m)^{3/2}
     + \frac{12}{13}(H_0[u] - m) \biggr],
\end{align*}
and
\begin{align*}
 \frac{\partial F_u}{\partial m} & = 12\sqrt{{2\over39}}M(M-m)^{1/2} - {12\over13}M+{12\over13}H_0[u]\\
     &\quad\
     + {8\over5}\sqrt{2\over39}\biggl[ -{3\over2}(M-m)^{1/2}(2m+3M) + 2(M-m)^{3/2}  \biggr]\\
     &= {12\over13}(H_0[u]-M)+8\sqrt{2\over39}(M-m)^{3/2}.
\end{align*}
Further differentiation yields
\begin{align*}
    \frac{\partial^2 F_u}{\partial M \partial m}&
     = -{12\over13}+12\sqrt{{2\over39}}(M-m)^{1/2}, \\
    \frac{\partial^2 F_u}{\partial M^2}=\frac{\partial^2 F_u}{\partial m^2}& = -12\sqrt{{2\over39}}(M-m)^{1/2}.
\end{align*}
To complete the proof, take $F_u = F_\varphi$, $M = M_\varphi$,
and $m = m_\varphi$ in the above expressions for the partial derivatives of
$F$ and use lemma \ref{peakonlemma}.
\end{proof}

\begin{lemma}\label{lm_maxf}
We have
   \begin{equation} \label{maxineq}
     \max_{x \in S^1} |f(x)| \leq \sqrt{ \frac{13}{12} }
     \;\|f\|_{\mu}, \quad f \in H^1(S^1),
   \end{equation}
   where the $\mu$-norm is defined in \eqref{muinnernorm}.
 Moreover, $\sqrt{ \frac{13}{12} }$ is the best constant and
 equality holds in (\ref{maxineq}) if and only if $f=c\varphi(\cdot - \xi + 1/2)$ for some
 $c,\xi \in \R$,
 i.e. if and only if $f$ has the shape of a peakon.
\end{lemma}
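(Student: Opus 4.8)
The plan is to prove the inequality \eqref{maxineq} by a direct variational/pointwise argument exploiting the specific structure of the $\mu$-norm, namely that $\|f\|_\mu^2 = \mu(f)^2 + \int_{S^1} f_x^2\,dx$. Since both sides of \eqref{maxineq} are invariant under translation and scaling, I would first reduce to the normalized situation: assume $\max_{x\in S^1}|f(x)| = f(x_0)$ is attained at $x_0$, and by translating assume $x_0 = 1/2$; I may also assume $f(1/2) > 0$. The key observation is that for any $f \in H^1(S^1)$ and any point $x \in [-1/2,1/2]$ (viewing $f$ as $1$-periodic), writing $f$ in terms of its value at the maximum and its derivative, one has the representation
\begin{equation*}
f(1/2) - \mu(f) = \int_{S^1}\bigl(f(1/2) - f(y)\bigr)\,dy = -\int_{-1/2}^{1/2}\!\!\int_y^{1/2} f_x(s)\,ds\,dy,
\end{equation*}
after which I would swap the order of integration to write $f(1/2) - \mu(f) = \int_{-1/2}^{1/2} K(s) f_x(s)\,ds$ for an explicit piecewise-linear kernel $K$ (essentially $K(s) = s$ on $(0,1/2)$ and $K(s) = -s$... wait, up to sign and the constant; in any case $K(s)$ is, up to normalization, exactly $\varphi_x(s)$ rescaled, which is the source of the extremizer being the peakon).

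Next I would combine this with the trivial identity $f(1/2) = \mu(f) + (f(1/2) - \mu(f))$ and apply Cauchy--Schwarz to the pair $(\mu(f),\, \|f_x\|_{L^2})$ against the vector whose components encode the constant $1$ and the kernel $K$. Concretely, write
\begin{equation*}
f(1/2) = 1\cdot\mu(f) + \int_{S^1} K(s)\,f_x(s)\,ds \leq \sqrt{1 + \|K\|_{L^2(S^1)}^2}\;\sqrt{\mu(f)^2 + \|f_x\|_{L^2}^2} = \sqrt{1+\|K\|_{L^2}^2}\,\|f\|_\mu,
\end{equation*}
where the first inequality is Cauchy--Schwarz in $\R \oplus L^2(S^1)$. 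Then I would compute $\|K\|_{L^2(S^1)}^2$ explicitly; the arithmetic should yield $1 + \|K\|_{L^2}^2 = 13/12$, matching the claimed best constant. (One should double-check the kernel: the condition that the maximum is an interior max forces the relevant "$f(1/2) - f(y)$" combination, and the computation of $\int K^2$ reduces to $\int_{-1/2}^{1/2} s^2\,ds = 1/12$ plus a correction, so the numbers are consistent with $\sqrt{13/12}$.) For the equality case, Cauchy--Schwarz is an equality iff $f_x$ is proportional to $K$ and $\mu(f)$ sits in the right ratio; since $K$ is (a rescaling of) $\varphi_x$, this forces $f = c\,\varphi(\cdot - \xi + 1/2)$, and conversely Lemma \ref{peakonlemma} lets one verify directly that $\varphi$ achieves equality by plugging in $M_\varphi = 1$, $\mu(\varphi) = 12/13$, and $\|\varphi_x\|_{L^2}^2 = 2H_1[\varphi] - \mu(\varphi)^2 = 12/13 - 144/169$.

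The main obstacle I anticipate is getting the kernel $K$ and the constant exactly right, including the sign bookkeeping and the correct handling of the "maximum at an interior point" hypothesis (which is what rules out, e.g., the kernel picking up the wrong boundary terms). A clean way to avoid fiddly casework is to fix the max at $x = 1/2$ and work on the fundamental domain $[-1/2,1/2]$, writing $f(1/2) - f(y) = \int_y^{1/2} f_x$ for $y \ge 0$ and $f(1/2) - f(y) = \int_y^{1/2} f_x$ for $y \le 0$ as well (same formula, since $1/2$ is the right endpoint), then integrating in $y$ over $[-1/2,1/2]$ and using Fubini; the resulting kernel is $K(s) = 1/2 - s$ for $s \in (0,1/2)$ and $K(s) = 1/2 + s$ for $s \in (-1/2, 0)$, up to the overall sign and whatever normalization makes $\mu(f)$ appear. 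A secondary subtlety is that the statement asserts $\sqrt{13/12}$ is the \emph{best} constant and characterizes \emph{all} extremizers; since the Cauchy--Schwarz equality condition is sharp, this comes for free once the reduction is set up, but I would make sure the scaling/translation reduction is stated carefully so that the "if and only if" is not lost. Alternatively, if the direct kernel computation proves annoying, one can instead argue as in \cite{le1}: for the extremal problem $\sup\{f(1/2)^2 : \|f\|_\mu = 1\}$, the Euler--Lagrange equation is $A f = $ (point mass at $1/2$), whose solution is exactly the peakon profile, and then one evaluates — but the Cauchy--Schwarz route above is more self-contained and I would prefer it.
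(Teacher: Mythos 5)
Your approach is, once unwound, the same as the paper's: the paper writes $f(x) = \frac{13}{12}\langle \varphi(\cdot-x+1/2), f\rangle_\mu$ (a reproducing-kernel identity for the $\mu$-inner product, obtained from $\varphi_{xx} = \frac{12}{13} - \frac{12}{13}\delta(\cdot-1/2)$) and applies Cauchy--Schwarz in $\langle\cdot,\cdot\rangle_\mu$ together with $\|\varphi\|_\mu^2 = 12/13$; since $\frac{13}{12}\partial_y\varphi(y-x+1/2) = y-x+1/2$, this is exactly your identity $f(1/2) = \mu(f) + \int_{-1/2}^{1/2} s\, f_x(s)\,ds$ followed by Cauchy--Schwarz in $\R\oplus L^2$. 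So the strategy is sound and the equality case does come for free. However, two concrete points in your write-up need repair. First, your final kernel is wrong: integrating $f(1/2)-f(y) = \int_y^{1/2} f_x(s)\,ds$ over $y\in[-1/2,1/2]$ and applying Fubini gives $K(s) = s + 1/2$ on all of $(-1/2,1/2)$, not the tent function $K(s) = 1/2 - |s|$ you state at the end; the latter fails already for $f=\varphi$, since $\int_{-1/2}^{1/2}(1/2-|s|)\cdot\frac{12}{13}s\,ds = 0$ by odd symmetry while $\varphi(1/2)-\mu(\varphi) = 1/13$. Second, the step you gloss as ``plus a correction'' is essential and must be made explicit: with the raw kernel $s+1/2$ one has $\|K\|_{L^2}^2 = 1/3$ and Cauchy--Schwarz only yields the non-sharp constant $\sqrt{4/3}$. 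You must use $\int_{S^1} f_x\,dx = 0$ to replace $K$ by its mean-zero part $K(s) = s$, for which $\|K\|_{L^2}^2 = 1/12$ and $\sqrt{1+1/12} = \sqrt{13/12}$. With that normalization the equality analysis ($f_x(s) = \lambda s$, $\mu(f) = \lambda$) correctly forces $f = \frac{13\lambda}{12}\varphi$ up to translation, matching the paper's conclusion.
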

\begin{proof}
For $x \in S^1$, by \eqref{muinnernorm} and \eqref{eqn_phi_xx}, we have
\begin{align*}
\frac{13}{12} \langle \varphi(\cdot-x+1/2), f \rangle_{\mu}
&= {13\over12}\mu(\varphi(\cdot-x+1/2))\mu(f) + \frac{1}{2} \int_{S^1} {\varphi'(y-x+1/2)f'(y) dy}\\
&= \frac{13}{12} \int_{S^1} {(\mu-\partial^2_y)\varphi(y-x+1/2)f(y) dy}
	\\
&= \int_{S^1} {\delta(y-x)f(y) dy} = f(x)
\end{align*}
Thus, since
$$H_1[\varphi]= \frac{1}{2} \|\varphi\|_{\mu}^2 =
\frac{6}{13},$$
we get
\begin{equation} \label{fineq}
  f(x) = \frac{13}{12} \langle \varphi(\cdot-x+1/2), f \rangle_{\mu}
  \leq \frac{13}{12} \| \varphi \|_{\mu} \|f\|_{\mu}
  = \sqrt{\frac{13}{12}} \; \|f\|_{\mu},
\end{equation}
with equality if and only if $f$ and $\varphi(\cdot-x+1/2)$ are proportional.
Taking the maximum of (\ref{fineq}) over $S^1$ proves the lemma.
\end{proof}
\begin{remark}\label{rk_height}
 Lemma \ref{lm_maxf} again indicates that among all travelling waves of fixed energy, the
peakon has maximal height (see also \cite{C-S, le1}).
\end{remark}

The next lemma shows that the $\mu$-norm is equivalent to the $H^1({S^1})$-norm.

\begin{lemma}\label{lm_equivnorms}
Every $u\in H^1(S^1)$ satisfies
\begin{equation}\label{eqn_equivnorms}
  \|u\|^2_\mu \leq \|u\|^2_{H^1(S^1)} \leq 3\|u\|^2_\mu.
\end{equation}
\end{lemma}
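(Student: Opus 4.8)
The plan is to treat the two inequalities in \eqref{eqn_equivnorms} separately; each reduces to a one-line estimate once one writes out $\|u\|_\mu^2 = \mu(u)^2 + \int_{S^1} u_x^2\,dx$ and $\|u\|_{H^1(S^1)}^2 = \int_{S^1} u^2\,dx + \int_{S^1}u_x^2\,dx$, so that the common term $\int_{S^1}u_x^2\,dx$ can be cancelled.

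For the left inequality, I would simply subtract: $\|u\|_{H^1(S^1)}^2 - \|u\|_\mu^2 = \int_{S^1}u^2\,dx - \mu(u)^2 \ge 0$ by the Cauchy--Schwarz inequality $\mu(u)^2 = \left(\int_{S^1}u\,dx\right)^2 \le \int_{S^1}1\,dx\int_{S^1}u^2\,dx = \int_{S^1}u^2\,dx$, using $|S^1|=1$.

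For the right inequality the point is to control $\int_{S^1}u^2\,dx$ by $\mu(u)^2$ plus a multiple of $\int_{S^1}u_x^2\,dx$, i.e. a Poincar\'e--Wirtinger estimate on the unit circle. I would decompose $u = \mu(u) + v$ with $v = u - \mu(u)$; since $\int_{S^1}v\,dx = 0$ the cross term drops and $\int_{S^1}u^2\,dx = \mu(u)^2 + \int_{S^1}v^2\,dx$. To bound $\int_{S^1}v^2\,dx$, note that for $x,y\in S^1$ one has $|u(x)-u(y)| = \left|\int_y^x u_x\,ds\right| \le \|u_x\|_{L^1(S^1)} \le \|u_x\|_{L^2(S^1)}$ (the last step being the embedding $L^2(S^1)\hookrightarrow L^1(S^1)$ coming from $|S^1|=1$); integrating this in $y$ gives $\|v\|_{L^\infty(S^1)}\le\|u_x\|_{L^2(S^1)}$, hence $\int_{S^1}v^2\,dx \le \|v\|_{L^\infty(S^1)}^2 \le \int_{S^1}u_x^2\,dx$. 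Therefore $\|u\|_{H^1(S^1)}^2 = \int_{S^1}u^2\,dx + \int_{S^1}u_x^2\,dx \le \mu(u)^2 + 2\int_{S^1}u_x^2\,dx \le 3\big(\mu(u)^2 + \int_{S^1}u_x^2\,dx\big) = 3\|u\|_\mu^2$.

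I do not expect a genuine obstacle here; the only mild care needed is the passage on the circle from the fundamental theorem of calculus to the oscillation bound $\|u-\mu(u)\|_{L^\infty}\le\|u_x\|_{L^1}$. As an alternative one may bypass this entirely by expanding $u = \sum_{k\in\Z}\hat u_k e^{2\pi i k x}$ and comparing Fourier weights: for $k=0$ the weights in $\|\cdot\|_\mu^2$ and $\|\cdot\|_{H^1(S^1)}^2$ coincide, while for $k\ne0$ one has $4\pi^2k^2 \le 1+4\pi^2k^2 \le 12\pi^2k^2$, which gives both inequalities at once (and shows the constant $3$ is far from sharp).
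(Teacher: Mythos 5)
Your proof is correct, but for the second inequality you take a genuinely different route from the paper. The first inequality is identical in both: $\mu(u)^2\le\int_{S^1}u^2\,dx$ (you call it Cauchy--Schwarz, the paper calls it Jensen; on a probability space these are the same estimate). For the upper bound, the paper does not use a Poincar\'e--Wirtinger argument: it estimates $\int_{S^1}u^2\,dx\le\max_{S^1}|u|^2$ and then invokes lemma \ref{lm_maxf}, the sharp sup-bound $\max_{S^1}|u|\le\sqrt{13/12}\,\|u\|_\mu$ obtained by exhibiting $\tfrac{13}{12}\varphi(\cdot-x+1/2)$ as the reproducing kernel of the $\mu$-inner product; this yields the constant $\tfrac{13}{12}+1=\tfrac{25}{12}\le 3$. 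Your argument instead splits $u=\mu(u)+v$ with $\int_{S^1}v\,dx=0$, drops the cross term, and bounds $\|v\|_{L^\infty}\le\|u_x\|_{L^1}\le\|u_x\|_{L^2}$ via the fundamental theorem of calculus on an arc of length at most one; this is completely self-contained and in fact gives the slightly better constant $2$, since $\mu(u)^2+2\int_{S^1}u_x^2\,dx\le 2\|u\|_\mu^2$. The trade-off is that the paper's route reuses machinery it needs anyway (lemma \ref{lm_maxf} is central to the whole stability argument), whereas yours requires nothing beyond absolute continuity of $H^1$ functions. Your Fourier-series alternative is essentially the content of the remark the authors place immediately after the lemma, where a Fourier-type inequality gives $\|u\|_{H^1(S^1)}^2\le 3\mu(u)^2+\tfrac{27}{24}\int_{S^1}u_x^2\,dx\le 3\|u\|_\mu^2$; as you note, that route even shows the constant $3$ is far from optimal.
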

\begin{proof}
The first inequality holds because (by Jensen's inequality)
$$\mu(u)^2 \leq \int_{S^1} u^2 dx, \qquad u \in H^1(S^1).$$

The second inequality holds because, by lemma \ref{lm_maxf},
$$\|u\|^2_{H^1(S^1)}
\leq \max_{x \in S^1} |u(x)|^2 + \int_{S^1} u_x^2 dx
\leq \left(\frac{13}{12} + 1\right) \|u\|_{\mu}^2.$$
\end{proof}

\begin{remark}
The previous two lemmas can also be proved directly using a Fourier series argument. Indeed, for every $f\in H^3(S^1)$ and $\epsilon>0$, we have (cf. the proof of lemma 2 in \cite{C})
\begin{equation}\label{eqn_estmax}
\max_{x\in S^1}f^2(x)\leq {\epsilon+2\over 24}\int_{S^1}f_x^2 dx +{\epsilon+2\over \epsilon}\mu(f)^2.
\end{equation}
The inequality (see lemma 2.6 in \cite{le1})
\begin{equation}\label{maxH1estimate}
  \max_{x \in S^1} |f(x)|^2 \leq \frac{\cosh(1/2)}{2\sinh(1/2)} \|f\|_{H^1(S^1)}^2, \qquad f \in H^1(S^1),
\end{equation}
implies that the map $f \mapsto \max_{x \in S^1} f(x)$ is continuous from $H^1(S^1)$ to $\R$.
Thus, since $H^3$ is dense in $H^1$, equation (\ref{eqn_estmax}) also holds for $f\in H^1(S^1)$.
It follows that, for every $u\in H^1(S^1)$ and every $\epsilon>0$,
\begin{equation}\label{eqn_equivnorms_epsilon}
\|u\|^2_\mu\leq \|u\|^2_{H^1(S^1)}\leq {\epsilon+2\over \epsilon} \mu^2(u) + {\epsilon+26\over24}\int_{S^1}u^2_x dx.
\end{equation}
In particular, we have (taking $\epsilon=1$)
\begin{equation}\label{eqn_equivnorms}
\|u\|^2_\mu\leq \|u\|^2_{H^1(S^1)}\leq 3 \mu^2(u) + {27\over24}\int_{S^1}u^2_x dx\leq 3\|u\|^2_\mu,
\end{equation}
again showing the equivalence of the two norms.

On the other hand, letting $\epsilon = 24$ in (\ref{eqn_estmax}), we recover (\ref{maxineq}). However, the proof we give in lemma \ref{lm_maxf} provides a better idea in concern with the best constant.
\end{remark}

\begin{lemma}\label{lm_contMm}\cite{le1}
   If $u \in C([0,T); H^1({S^1}))$, then
    $$ M_{u(t)}= \max_{x \in {S^1}} u(x,t) \quad \hbox{and} \quad
     m_{u(t)}= \min_{x \in {S^1}} u(x,t)$$
   are continuous functions of $t \in [0,T)$.
\end{lemma}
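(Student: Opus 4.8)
The plan is to reduce the statement to the fact that the functional $u \mapsto \max_{x\in S^1} u(x)$ is Lipschitz continuous from $H^1(S^1)$ to $\R$, and then to compose this with the hypothesis that $t \mapsto u(\cdot,t)$ is continuous from $[0,T)$ into $H^1(S^1)$.

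First I would record the elementary two-sided estimate: for any two functions $v,w \in C(S^1)$,
$$\Big| \max_{x\in S^1} v(x) - \max_{x\in S^1} w(x) \Big| \leq \max_{x\in S^1} |v(x) - w(x)|.$$
This follows because $v(x) \leq w(x) + \max_{y\in S^1}|v(y) - w(y)|$ for every $x$, so taking the maximum over $x$ gives $\max v \leq \max w + \max|v-w|$, and interchanging the roles of $v$ and $w$ gives the reverse bound. Since $S^1$ is compact and $v,w$ are continuous, all the maxima are genuinely attained, so there is no subtlety about $\sup$ versus $\max$ here.

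Next I would combine this with a Sobolev-type bound already available in the excerpt. Lemma \ref{lm_maxf} together with the first inequality of lemma \ref{lm_equivnorms} give, for every $f \in H^1(S^1) \subset C(S^1)$,
$$\max_{x\in S^1} |f(x)| \leq \sqrt{\frac{13}{12}}\,\|f\|_{\mu} \leq \sqrt{\frac{13}{12}}\,\|f\|_{H^1(S^1)}$$
(alternatively one may invoke \eqref{maxH1estimate} directly). Applying this to $f = u(\cdot,s) - u(\cdot,t)$ yields, for $s,t \in [0,T)$,
$$|M_{u(s)} - M_{u(t)}| \leq \max_{x\in S^1} |u(x,s) - u(x,t)| \leq \sqrt{\frac{13}{12}}\,\|u(\cdot,s) - u(\cdot,t)\|_{H^1(S^1)}.$$
Since $u \in C([0,T); H^1(S^1))$, the right-hand side tends to $0$ as $s \to t$, so $t \mapsto M_{u(t)}$ is continuous. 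For the minimum I would simply note that $m_{u(t)} = -\max_{x\in S^1}\{-u(x,t)\}$ and that $t \mapsto -u(\cdot,t)$ is again in $C([0,T); H^1(S^1))$, so the identical argument applies.

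As for the main obstacle: there is essentially none. The lemma is elementary once the embedding $H^1(S^1) \hookrightarrow C(S^1)$ is in hand, and that embedding (with an explicit constant) has already been established in the excerpt. The only point that deserves a word of care is the interchange of the absolute value with the maximum, which is handled by the symmetric estimate above; everything else is just the composition of a (locally) Lipschitz map with a continuous curve. One could equally attribute the statement to \cite{le1}, where the periodic Camassa--Holm case is treated by the same reasoning.
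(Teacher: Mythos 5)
Your proof is correct and follows essentially the same route as the source: the paper itself does not prove this lemma (it simply cites \cite{le1}), but it records in the remark following lemma \ref{lm_equivnorms} that the bound \eqref{maxH1estimate} makes $f \mapsto \max_{x\in S^1} f(x)$ continuous from $H^1(S^1)$ to $\R$, which is exactly your Lipschitz-plus-composition argument. The two-sided estimate $|\max v - \max w| \leq \max|v-w|$ and the reduction $m_{u(t)} = -\max_x\{-u(x,t)\}$ are both handled correctly, so nothing is missing.
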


\begin{lemma}\label{lm_shape}
   Let $u \in C([0,T); H^1({S^1}))$ be a solution of (\ref{muCH}). Given a small
neighborhood
   $\mathcal{U}$ of $(M_\varphi, m_\varphi)$ in $\R^2$, there is a $\delta >0$ such that
   \begin{equation} \label{MminU}
     (M_{u(t)}, m_{u(t)}) \in \mathcal{U} \quad \hbox{for} \quad t \in [0,T) \quad
     if \quad
     \|u(\cdot, 0) - \varphi\|_{H^1({S^1})} < \delta.
   \end{equation}
\end{lemma}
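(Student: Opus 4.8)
The plan is to combine the three ingredients that have been assembled: (i) Lemma~\ref{lm_LyapunovF}, which says $F_{u(t)}(M_{u(t)},m_{u(t)}) \ge 0$ for all $t$; (ii) Lemma~\ref{lm_derivativesF}, which says that $F_\varphi$ has a strict nondegenerate local maximum equal to $0$ at $(M_\varphi, m_\varphi)$ (the Hessian is $-\tfrac{12}{13}I$); and (iii) a continuity-in-$u$ statement for the map $u \mapsto F_u$, so that if $u(0)$ is $H^1$-close to $\varphi$ then $F_{u(t)}$ is uniformly close to $F_\varphi$ for all $t \in [0,T)$. Granting these, the argument is: near $(M_\varphi,m_\varphi)$ Lemma~\ref{lm_derivativesF} and Taylor's theorem give $F_\varphi(M,m) \le -\tfrac{c_0}{2}\big((M-M_\varphi)^2+(m-m_\varphi)^2\big)$ for some $c_0>0$ on a fixed ball $B$ around $(M_\varphi,m_\varphi)$; shrinking $\mathcal{U}$ we may assume $\mathcal{U}\subset B$ and that $F_\varphi < 0$ on the compact set $\partial\mathcal{U}$, say $F_\varphi \le -\kappa < 0$ there. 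If $F_{u(t)}$ is within $\kappa/2$ of $F_\varphi$ uniformly, then $F_{u(t)} < 0$ on $\partial\mathcal{U}$, hence by Lemma~\ref{lm_LyapunovF} the point $(M_{u(t)},m_{u(t)})$ cannot lie on $\partial\mathcal{U}$. Since this point starts inside $\mathcal{U}$ (for $\delta$ small, using that $u(0)$ close to $\varphi$ in $H^1 \hookrightarrow C$ forces $(M_{u(0)},m_{u(0)})$ near $(M_\varphi,m_\varphi)$, and $M_{u(0)} \ge m_{u(0)} > 0$ so it lies in the domain of $F_{u(t)}$) and moves continuously in $t$ by Lemma~\ref{lm_contMm}, a connectedness argument traps it in $\mathcal{U}$ for all $t \in [0,T)$.

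The first concrete step I would carry out is to make precise the continuity of $u \mapsto F_u$. By the remark after Lemma~\ref{lm_LyapunovF}, $F_u$ depends on $u$ only through $H_0[u]$, $H_1[u]$, $H_2[u]$, and $\|u\|_{L^2(S^1)}^2$. Each of these is continuous on $H^1(S^1)$: $H_0$ and $\|\cdot\|_{L^2}^2$ trivially, $H_1[u] = \tfrac12\|u\|_\mu^2$ by \eqref{muinnernorm} together with Lemma~\ref{lm_equivnorms}, and $H_2[u] = \int(\mu(u)u^2 + \tfrac12 u u_x^2)\,dx$ by the embedding $H^1(S^1)\hookrightarrow C(S^1)$ (Lemma~\ref{lm_maxf} bounds $\|u\|_{L^\infty}$ by $\|u\|_\mu$). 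Moreover for a weak solution these four quantities are \emph{constant} in $t$ for $H_0,H_1,H_2$, while $\|u(t)\|_{L^2}^2$ is not conserved but is controlled: $\|u(t)\|_{L^2}^2 \le \|u(t)\|_\mu^2 \cdot(\text{const})$? — more carefully, by Lemma~\ref{lm_maxf} and \eqref{muinnernorm}, $\|u(t)\|_{L^2}^2 \le \|u(t)\|_{L^\infty}^2 \le \tfrac{13}{12}\|u(t)\|_\mu^2 = \tfrac{13}{6}H_1[u]$, which \emph{is} conserved, hence bounded in terms of $H_1[u(0)]$, which is close to $H_1[\varphi]$. So if $\|u(0)-\varphi\|_{H^1}<\delta$ then $H_i[u(0)]$ is within $\omega(\delta)$ of $H_i[\varphi]$ for $i=0,1,2$ (some modulus $\omega$), these stay fixed, and $\|u(t)\|_{L^2}^2$ stays within an interval of length $O(\omega(\delta))$ around $\|\varphi\|_{L^2}^2$ (using $H_2[\varphi]$, $H_1[\varphi]$ and a continuity estimate to pin the base value; one needs that $\|u(t)\|_{L^2}$ is itself close to $\|\varphi\|_{L^2}$, not merely bounded — this follows because $F_{u(t)}(M_{u(t)},m_{u(t)})\ge0$ with $(M_{u(t)},m_{u(t)})$ already shown near $(M_\varphi,m_\varphi)$ pins down $\int u^2 dx$ via the $(H_0[u]-\tfrac{12}{13})\int u^2$ term — so there is a mild bootstrap here). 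Feeding these closeness bounds into the explicit formula for $F_u$, whose coefficients depend continuously (in fact smoothly, away from $M=m$) on $(H_0,H_1,H_2,\int u^2)$, yields $\sup_{(M,m)\in \overline{B}}|F_{u(t)}(M,m)-F_\varphi(M,m)| \le \widetilde\omega(\delta)$ uniformly in $t$.

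The main obstacle is precisely the time-dependence of $F_{u(t)}$ through $\|u(t)\|_{L^2}^2$, flagged in the outline: one must argue that this norm stays \emph{close to} $\|\varphi\|_{L^2}^2$, not just bounded. I expect this is handled by the bootstrap just sketched — one runs the connectedness/trapping argument on a time interval, uses that on this interval $(M_{u(t)},m_{u(t)})\in\mathcal U$ forces $\int u(t)^2\,dx$ close to its peakon value (reading off the coefficient of $(H_0[u]-12/13)$ and the other terms in $F_{u(t)}(M_{u(t)},m_{u(t)}) \ge 0$, combined with $F_\varphi \le 0$ and the closeness of the $H_i$'s), and then this in turn keeps $F_{u(t)}$ close to $F_\varphi$, closing the loop. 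The rest — Taylor expansion of $F_\varphi$ at its nondegenerate maximum, choosing $\mathcal U$, the open–closed argument on $\{t : (M_{u(s)},m_{u(s)})\in\mathcal U \ \forall s\le t\}$ using Lemma~\ref{lm_contMm} — is routine. I would also note at the end that, since $\mathcal U$ can be taken arbitrarily small by taking $\delta$ small, this delivers exactly the statement \eqref{MminU} and in fact shows $M_{u(t)}\to M_\varphi$, $m_{u(t)}\to m_\varphi$ as $\delta\to0$, uniformly in $t$, which is what the subsequent proof of Theorem~\ref{thm_stabprecise} will need in conjunction with Lemma~\ref{lm_H1est}.
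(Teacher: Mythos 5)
Your overall strategy coincides with the paper's: use Lemma \ref{lm_derivativesF} to see that $F_\varphi$ attains a nondegenerate maximum value $0$ at $(M_\varphi,m_\varphi)$, show that $F_{u(t)}$ is a uniformly small perturbation of $F_\varphi$, and then trap $(M_{u(t)},m_{u(t)})$ near $(M_\varphi,m_\varphi)$ by combining $F_{u(t)}(M_{u(t)},m_{u(t)})\ge 0$ (Lemma \ref{lm_LyapunovF}) with the continuity of $t\mapsto (M_{u(t)},m_{u(t)})$ (Lemma \ref{lm_contMm}) and the continuity of the $H_i$ on $H^1(S^1)$. That is exactly the paper's proof.

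There is, however, one genuine misstep: the ``main obstacle'' you identify --- that $\|u(t)\|_{L^2}^2$ must stay \emph{close to} $\|\varphi\|_{L^2}^2$ rather than merely bounded --- does not exist, and the bootstrap you sketch to overcome it is both unnecessary and unsound as stated. Write $H_i[u]=H_i[\varphi]+\epsilon_i$. Since $H_0[\varphi]=\tfrac{12}{13}$, the quantity $\int_{S^1} u^2\,dx$ enters $F_u$ only through the term $\bigl(H_0[u]-\tfrac{12}{13}\bigr)\int_{S^1} u^2\,dx=\epsilon_0\int_{S^1} u^2\,dx$, and a direct computation gives
$$F_u(M,m)-F_\varphi(M,m)=M\Bigl[\epsilon_1-H_0[\varphi]\epsilon_0-\tfrac12\epsilon_0^2+\tfrac{12}{13}\epsilon_0\Bigr]+\epsilon_0\int_{S^1}u^2\,dx+\tfrac{12}{13}m\,\epsilon_0-\epsilon_2.$$
The $L^2$ norm thus appears in the difference only multiplied by the small number $\epsilon_0$, so the crude bound $\int_{S^1}u^2\,dx\le 3\|u\|_\mu^2=6H_1[u]\le 12H_1[\varphi]=\tfrac{72}{13}$ (valid once $\epsilon_1<\tfrac{6}{13}$, by Lemma \ref{lm_equivnorms} and conservation of $H_1$) already makes this term $O(\epsilon_0)$ uniformly in $t$; closeness of $\|u(t)\|_{L^2}$ to $\|\varphi\|_{L^2}$ is never needed. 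Moreover, the bootstrap you propose --- extracting the value of $\int_{S^1} u^2\,dx$ from the single inequality $F_{u(t)}(M_{u(t)},m_{u(t)})\ge 0$ --- cannot work: a one-sided inequality at one point yields at best a one-sided bound on $\epsilon_0\int u^2\,dx$ (whose sign depends on that of $\epsilon_0$, which may vanish), not a two-sided localization of the $L^2$ norm. Once you delete that detour and substitute the explicit difference formula above, your argument is complete and is precisely the one in the paper.
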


\begin{proof}
Suppose $w \in H^1(S^1)$ is a small perturbation of $\varphi$ such that
$H_i[w]=H_i[\varphi]+\epsilon_i$, $i=0,1,2$. Then
\begin{align*}
F_w(M,m) = F_\varphi(M,m) + M\left [\epsilon_1 - H_0[\varphi]\epsilon_0 - {1\over2}\epsilon_0^2 + \frac{12}{13} \epsilon_0 \right]
 + \epsilon_0\int_{S^1}w^2 dx + {12\over13}m\epsilon_0 - \epsilon_2.
\end{align*}
%$\int_{S^1}\varphi^2 dx = \frac{721}{845}$.
Suppose $\epsilon_1 < 6/13$ so that $H_1[w] \leq 2H_1[\varphi]$. Then, by lemma \ref{lm_equivnorms},
\begin{equation}\label{L2normbound}
  \int_{S^1}w^2 dx \leq \|w\|_{H^1}^2 \leq 3 \|w\|_\mu^2 = 6 H_1[w] \leq 12 H_1[\varphi] = \frac{72}{13}.
\end{equation}
The point is that $\int_{S^1}w^2 dx$ is bounded. Thus, $F_w$ is a small perturbation of $F_\varphi$. The effect of the
perturbation near the point $(M_\varphi, m_\varphi)$ can be
made arbitrarily small by choosing the $\epsilon_i$'s small.
Lemma \ref{lm_derivativesF} says that $F_\varphi(M_\varphi, m_\varphi) = 0$ and that $F_\varphi$
has a critical point with negative definite second derivative at
$(M_\varphi, m_\varphi)$. By continuity of the second derivative, there is
a neighborhood around $(M_\varphi, m_\varphi)$ where $F_\varphi$ is concave
with curvature bounded away from zero. Therefore, the set where $F_w \geq 0$ near $(M_\varphi, m_\varphi)$ will be contained in a neighborhood of $(M_\varphi, m_\varphi)$.

Now let $\mathcal{U}$ be given as in the statement of the lemma.
Shrinking $\mathcal{U}$ if necessary,
we infer the existence of a $\delta' >0$ such that for $u\in C([0,T); H^1({S^1}))$ with
\begin{equation} \label{HclosetoH}
   |H_i[u] - H_i[\varphi]|<\delta', \qquad i=0,1,2,
\end{equation}
it holds that the set where $F_{u(t)} \geq 0$ near $(M_\varphi, m_\varphi)$
is contained in $\mathcal{U}$ for each $t \in [0,T)$.
%, and that $\mathcal{U}$ is surrounded by a set where $F_u < 0.$
By lemma \ref{lm_LyapunovF} and lemma \ref{lm_contMm}, $M_{u(t)}$ and $m_{u(t)}$ are continuous functions of $t \in [0,T)$
and $F_{u(t)}(M_{u(t)},m_{u(t)}) \geq 0$ for $t \in [0,T)$. We conclude that
for $u$ satisfying (\ref{HclosetoH}), we have
$$(M_{u(t)}, m_{u(t)}) \in \mathcal{U} \quad \hbox{for} \quad t \in [0,T) \quad \hbox{if}
\quad (M_{u(0)}, m_{u(0)}) \in \mathcal{U}.$$
However, the continuity of the conserved functionals $H_i:H^1({S^1})\rightarrow \R$,
$i=0,1,2$,
shows that there is a $\delta >0$ such that (\ref{HclosetoH}) holds for
all $u$ with
$$   \|u(\cdot, 0) - \varphi\|_{H^1({S^1})} < \delta. $$
Moreover, in view of the inequality (\ref{maxH1estimate}),
taking a smaller $\delta$ if necessary, we may
also assume that $(M_{u(0)}, m_{u(0)}) \in \mathcal{U}$ if
$\|u(\cdot, 0) - \varphi\|_{H^1({S^1})} < \delta.$
This proves the lemma.
\end{proof}

\noindent {\it Proof of theorem \ref{thm_stabprecise}.}
Let $u \in C([0,T); H^1({S^1}))$ be a solution of (\ref{muCH})
and suppose we are given an $\epsilon >0$. Pick a neighborhood
$\mathcal{U}$ of $(M_\varphi, m_\varphi)$ small enough that
$|M - M_\varphi| < \frac{13\epsilon^2}{144}$ if
$(M,m) \in \mathcal{U}$.
Choose a $\delta > 0 $ as in lemma \ref{lm_shape} so that (\ref{MminU}) holds.
Taking a smaller $\delta$ if necessary we may also assume that
  $$|H_1[u]-H_1[\varphi]|< \frac{\epsilon^2}{12}
  \quad \hbox{if} \quad \|u(\cdot, 0) - \varphi\|_{H^1({S^1})} < \delta.$$
Applying lemma \ref{lm_equivnorms} and lemma \ref{lm_H1est}, we conclude that
\begin{align*}
\|u(\cdot, t) - \varphi (\cdot - \xi (t)) \|_{H^1({S^1})}^2&\leq 3\|u(\cdot, t) - \varphi (\cdot - \xi (t)) \|_\mu^2\\
 & = 6(H_1[u]-H_1[\varphi]) + {72\over13}(M_\varphi - M_{u(t)}) < \epsilon^2,
\qquad t \in [0,T),
\end{align*}
where $\xi(t) \in \R$ is any point where $u(\xi(t) + 1/2, t)=M_{u(t)}.$
This completes the proof of the theorem.$\hfill\Box$ \bigskip

\begin{remark}\label{rk3}
Note that our proof of stability applies to any
  $u \in C([0,T); H^1({S^1}))$ such that $H_i[u]$, $i=0,1,2$, are
  independent of time. The fact that $u$ satisfies (\ref{weakmuCH})
  in distributional sense was actually never used.
\end{remark}

\section{Comments}\label{commentssec}
%The only way for a classical solution to (\ref{muCH}) to blow up in finite time is that the wave breaks: the solution remains bounded while its slope becomes unbounded in finite time \cite{C-E}.
Some classical solutions of (\ref{muCH}) exist for all time while others develop
into breaking waves \cite{F-L-Q, KLM, lmt}.
%Let us point out that a model for waves on shallow water which exhibits both wave breaking as well as peaked waves of permanent form was sought after for a long time \cite{W}.
%We now discuss the issue of well-posedness.
If $u_0 \in H^3({S^1})$, then there
exists a maximal time $T=T(u_0)>0$ such that (\ref{muCH}) has a unique solution
$u \in C([0,T); H^3({S^1})) \, \cap \, C^1([0,T); H^2({S^1}))$ with $H_0, H_1, H_2$
conserved. For $u_0 \in H^r({S^1})$ with $r>3/2$, it is known \cite{lmt}
that (\ref{muCH}) has a unique strong solution $u\in C([0,T); H^r({S^1}))$ for some
$T>0$, with $H_0, H_1, H_2$ conserved. However, the peakons do not belong to
the space $H^r({S^1})$ for $r>3/2$. Thus, to describe the peakons one has to study weak
solutions of (\ref{muCH}). The existence and uniqueness of weak solutions to \eqref{muCH} is still open at point.  Therefore, close to a peakon, there may exist profiles that
develop into breaking waves and profiles that lead to globally existing waves.
Our stability theorem is applicable in both cases up to breaking time.

%Let $u(x,t)$ be a solution of (\ref{CH}) starting close to the peakon.
%Solutions of (\ref{CH}) are either globally existent in time or breaking
%waves \cite{??}. Let $T$ be the maximum time of existence for $u$.
%Our proof of stability uses only the following properties of $u$:
%\begin{itemize}
%   \item The functionals $H_0[u], H_1[u],$ and $H_2[u]$, are conserved with
%   time.
%  \item $u(\cdot, t) \in H^1(S)$ for $t \in [0,T)$.
%   \item $M_{u(t)} = \max_{x \in {S^1}} \{u(x,t)\}$ and
%           $m_{u(t)}= \min_{x \in {S^1}}\{u(x,t)\}$ are continuous functions of $t$.
%\end{itemize}
%  Therefore any $u$ having these properties will stay close to the peakon.
%  If $T<\infty$ it will stay close up until the time it breaks.
%  This is a weak kind of solution. For example, if
%  $u \in C([0,T); H^?({S^1})) \cap C^1([0,T); H^?({S^1}))$
%and $u$ is a solution of (\ref{CH}), then $u$
%  certainly falls into this class.

%Actually, even though we don't prove it mathematically here, looking at
% the graph of $F_\varphi$ (see Figure 4), it's clear that $F_\varphi(M, m)$
%gets close to zero only near the point $(M_\varphi, m_\varphi)$. Therefore
%not even the continuity of $M_{u(t)}$ and $m_{u(t)}$ is needed as
%long as $F_u(M_{u(t)}, m_{u(t)})$ is defined, i.e. as long as $u(x,t)>0$.

\end{document}